\documentclass[review,onefignum,onetabnum]{siamart250211}

\usepackage{lipsum}
\usepackage{amsfonts}
\usepackage{graphicx}
\usepackage{epstopdf}
\usepackage{algorithmic}
\usepackage{amssymb}
\usepackage[english]{babel}
\usepackage{array}
\usepackage{adjustbox}
\usepackage{relsize}
\usepackage{spverbatim}
\usepackage{listings}
\usepackage{mathrsfs}
\usepackage{derivative}
\usepackage{mathtools}
\usepackage{float}
\usepackage[section]{placeins}
\usepackage[numbers,sort]{natbib}

\usepackage{hyperref}
\usepackage{cleveref}

\usepackage{etoolbox} 
\makeatletter
\newcommand{\crefcomma}[1]{%
  \begingroup
    \def\crefcomma@sep{}%
    \forcsvlist{\crefcomma@do}{#1}%
  \endgroup
}
\newcommand{\crefcomma@do}[1]{%
  \ifx\crefcomma@sep\@empty\else,~\fi
  \cref{#1}%
  \def\crefcomma@sep{,}%
}
\newcommand{\Crefcomma}[1]{%
  \begingroup
    \def\crefcomma@sep{}%
    \forcsvlist{\Crefcomma@do}{#1}%
  \endgroup
}
\newcommand{\Crefcomma@do}[1]{%
  \ifx\crefcomma@sep\@empty\else,~\fi
  \Cref{#1}%
  \def\crefcomma@sep{,}%
}
\makeatother

\DeclareMathAlphabet{\mathpzc}{OT1}{pzc}{m}{it}


\newcolumntype{L}{>{$}l<{$}}

\ifpdf
  \DeclareGraphicsExtensions{.eps,.pdf,.png,.jpg}
\else
  \DeclareGraphicsExtensions{.eps}
\fi


\newsiamremark{hypothesis}{Hypothesis}
\crefname{hypothesis}{Hypothesis}{Hypotheses}
\newsiamthm{claim}{Claim}
\newtheorem{remark}{Remark}
\nolinenumbers

\headers{Differential equations having coalescing turning points}{T. M. Dunster}


\title{Asymptotic expansions for solutions of differential equations having coalescing turning points, with an application to Legendre functions}

\author{T. M. Dunster\thanks{Department of Mathematics and Statistics, San Diego State University, 5500 Campanile Drive, San Diego, CA 92182-7720, USA. 
  (\email{mdunster@sdsu.edu}, \url{https://tmdunster.sdsu.edu}).}
  }

\usepackage{amsopn}

\makeatletter
\newcommand*{\addFileDependency}[1]{
  \typeout{(#1)}
  \@addtofilelist{#1}
  \IfFileExists{#1}{}{\typeout{No file #1.}}
}
\makeatother


\nolinenumbers
\ifpdf
\hypersetup{
  pdftitle={Asymptotic expansions for solutions of differential equations having a coalescing turning point and double pole, with an application to Legendre functions},
  pdfauthor={T. M. Dunster}
}
\fi
\begin{document}

\maketitle

\begin{abstract}
Linear second-order ordinary differential equations of the form $d^{2}w/dz^{2}=\{u^{2}f(a,z)$ $+g(z)\}w$ are studied for large values of the real parameter $u$, where $z$ ranges over a bounded or unbounded complex domain $Z$, and $a_{0} \le a \le a_{1} < \infty$. The functions $f(a,z)$ and $g(z)$ are analytic in the interior of $Z$. Moreover, $f(a,z)$ has exactly two real simple zeros in $Z$ for $a>a_{0}$ that depend continuously on $a$ and coalesce into a double zero as $a \to a_{0}$. Uniform asymptotic expansions are obtained for solutions in terms of parabolic cylinder functions and their derivatives, together with slowly varying coefficient functions. The coefficients are readily computable and explicit error bounds are provided. The results are then applied to derive new asymptotic expansions for the associated Legendre functions when both the degree $\nu$ and the order $\mu$ are large.
\end{abstract}

\begin{keywords}
{Asymptotic expansions, turning point theory, WKB theory, Legendre functions, parabolic cylinder functions}
\end{keywords}

\begin{AMS}
34E05, 33C05, 34M60, 34E20
\end{AMS}

\section{Introduction}
\label{sec:Introduction}

We consider differential equations of the form
\begin{equation} 
\label{eq01}
\frac{d^{2}w}{dz^{2}}=\left\{u^{2}f(a,z)+g(z)\right\}w.
\end{equation}
For $a_{0}<a \leq a_{1} < \infty$ we assume that $f(a,z)$ has two real simple zeros at $z=z_{1}$ and $z=z_{2}$, which depend continuously on $a$, where $z_{1} < z_{2}$, and $z_{1} \to z_{2}$ as $a \to a_{0}$. We assume $f(a,z)$ and $g(z)$ are analytic in a certain domain $Z$ except possibly at the boundary of the domain. The domain $Z$ contains the two turning points, and $f(a,z)$ does not have other zeros in $Z$. In addition, $f(a,z)$ is assumed to be real for real $z=x \in Z \cap \mathbb{R}$, and $f(a,z)<0$ in $(z_{1},z_{2})$.  Both $f(a,z)$ and $g(z)$ may depend on $u$ since explicit error bounds are included in the Liouville-Green (LG) expansions that we employ.  Note that under the above hypotheses $f(a,z)>0$ for real $z \in Z$ such that $z<z_{1}$ and $z>z_{2}$.

These LG expansions arise from the standard transformation of equation (\ref{eq01}) (see \cite[Chap. 10]{Olver:1997:ASF}); here asymptotic expansions appear in the exponent \cite{Dunster:2020:LGE}, rather than multiplying an exponential \cite[Chap. 10]{Olver:1997:ASF}. Now, from \cite[Eq. (1.3)]{Dunster:2020:LGE}, define $\xi$ by
\begin{equation} 
\label{eq02}
\xi=\int_{z_{2}}^{z}\{f(a,t)\}^{1/2} dt.
\end{equation}
The branches are such that $\xi \geq 0$ for $z \geq z_{2}$ and by continuity elsewhere in $Z$ having a cut along $(-\infty,z_{2}]$. This transforms (\ref{eq01}) into the standard LG form 
\begin{equation} 
\label{eq03}
\frac{d^{2}W}{d \xi^{2}}
=\left\{u^{2}+\psi(a,z)\right\}W,
\end{equation}
where $W=f^{1/4}w$ and
\begin{equation}
\label{eq04}
\psi(a,z)=\frac{4f(a,z)f''(a,z)-5f'(a,z)^2}{16f(a,z)^3}
+\frac{g(z)}{f(a,z)}.
\end{equation}
It is clear that this is not analytic at the turning points $z=z_{j}$ ($j=1,2$) since $f(a,z)$ vanishes at these points, and this is why LG expansions are not valid there.

Fifty years ago, in a significant paper on the asymptotic theory of linear differential equations, Olver \cite{Olver:1975:SOL}  made a change of independent variable $\zeta$ and introduced a new parameter $\alpha$, to construct asymptotic solutions that are valid at the turning points. From Olver \cite[Eq. (2.8)]{Olver:1975:SOL} $\zeta$ is defined implicitly by
\begin{equation}
\label{eq05}
\frac{d\zeta}{dz}=\left(\frac{f(a,z)}{\zeta^{2}-\alpha^{2}}\right)^{1/2}.
\end{equation}
With $V=\{(\zeta^{2}-\alpha^{2})/f\}^{1/4}w$ (\ref{eq05}) transforms (\ref{eq01}) into the form 
\begin{equation} 
\label{eq06}
\frac{d^{2}V}{d \zeta^{2}}=\left\{u^{2}\left(\zeta^{2}-\alpha^2\right)
+\hat{\psi}(\alpha,\zeta)\right\}V,
\end{equation}
where
\begin{equation} 
\label{eq07}
\hat{\psi}(\alpha,\zeta)
=\frac{3\zeta^{2}+2\alpha^{2}}{4\left(\zeta^{2}-\alpha^{2}\right)^{2}}
+\left(\zeta^{2}-\alpha^{2}\right)\psi(a,z).
\end{equation}
Integration of (\ref{eq05}) and referring to (\ref{eq02}) yields the implicit equation for $\zeta$
\begin{multline} 
\label{eq08}
\xi= \int_{\alpha}^{\zeta}\left(t^2
-\alpha^{2}\right)^{1/2} dt
=\frac{1}{2}\zeta\left(\zeta^{2}-\alpha^{2}\right)^{1/2}
\\
-\frac{1}{2}\alpha^{2}\ln\left\{\zeta+\left(\zeta^{2}-\alpha^{2}\right)^{1/2}\right\}
+\frac{1}{2}\alpha^{2}\ln(\alpha).
\end{multline}
Note that as $\zeta \to \infty$
\begin{equation} 
\label{eq09}
\xi=\frac12\zeta^{2}-\frac{1}{4}\alpha^{2}
\left\{2\ln\left(\frac{2\zeta}{\alpha}\right)+1\right\}
+\mathcal{O}\left(\frac{1}{\zeta^{2}}\right).
\end{equation}

In \cref{eq02,eq08} the lower integration limits were chosen so that $\zeta=\alpha$ ($\xi=0$) corresponds to $z=z_{2}$. We also want $\zeta=-\alpha$ to correspond to $z=z_{1}$, and from 
(\ref{eq08}) this is achieved by defining $\alpha$ so that
\begin{equation} 
\label{eq10}
\int_{z_{1}}^{z_{2}}\{-f(a,t)\}^{1/2} dt
=\int_{-\alpha}^{\alpha}\left(\alpha^{2}-t^{2}\right)^{1/2} dt
=\frac12 \alpha^{2} \pi,
\end{equation}
and therefore
\begin{equation} 
\label{eq11}
\alpha = \left\{\frac{2}{\pi}
\int_{z_{1}}^{z_{2}}\{-f(a,t)\}^{1/2} dt\right\}^{1/2}.
\end{equation}
With this choice of $\alpha$, $\zeta$ is an analytic function of $z$ at $z=z_{j}$ ($j=1,2$). Moreover, unlike $\psi(a,z)$, one can verify that $\hat{\psi}(\alpha,\zeta)$ is also analytic at $\zeta=\pm \alpha$ (equivalently at $z=z_{1}$ and $z=z_{2}$). Note that $\alpha \to 0$ as $a \to a_{0}$, and so we let $0 \leq \alpha \leq \alpha_{1}$ correspond to $a_{0} \leq a \leq a_{1}$.

Olver \cite{Olver:1975:SOL} constructed solutions of (\ref{eq06}) of the form
\begin{equation} 
\label{eq12}
V\left(u,\alpha,\zeta\right)
=Y(u,\alpha,\zeta)
+\epsilon(u,\alpha,\zeta),
\end{equation}
where $Y(u,\alpha,\zeta)$ satisfies the so-called comparison equation
\begin{equation} 
\label{eq13}
\frac{d^{2}Y}{d \zeta^{2}}=u^{2}\left(\zeta^{2}
-\alpha^2\right)Y,
\end{equation}
which can be expressed in terms of parabolic cylinder functions. One such solution is $U(-\frac12 u\alpha^2,\sqrt{2u}\,\zeta)$, where $U(b,z)$ is a parabolic cylinder function which is a solution of Weber's equation
\begin{equation} 
\label{eq14}
\frac{d^{2}U}{dz^{2}}=\left(\frac14 z^2+b^2\right)U.
\end{equation}
It is an important solution of the equation since it is the unique one that is recessive at $z=+\infty$ (see \cite[Sect. 12.2]{NIST:DLMF}). In particular, we have
\begin{equation} 
\label{eq15}
U(b,z) \sim z^{-b-\frac12 }
e^{-\frac{1}4 z^{2}}
\quad \left(z \to \infty,\,
\vert \arg (z)\vert \le \tfrac{3}{4}\pi -\delta\right),
\end{equation}
and also
\begin{equation} 
\label{eq16}
U'(b,z) \sim -\tfrac12 z^{-b+\frac12 }
e^{-\frac{1}4 z^{2}}
\quad \left(z \to \infty,\, 
\vert \arg (z)\vert \le \tfrac{3}{4}\pi -\delta\right).
\end{equation}

Olver obtained explicit bounds for the error term $\epsilon$ which proved, assuming that $f$ and $g$ are bounded for large $u$ if dependent on that parameter, that $\epsilon=\mathcal{O}(u^{-p}\ln(u))$ as $u \to \infty$ relative to the magnitude of $Y$ except near the zeros of that function. Here $p=\tfrac23$ or $p=1$ depending on the sign of $f$ in the interval $(z_{1},z_{2})$. The bounds hold uniformly in an interval that is possibly unbounded or contains a singularity at one or both of its end points. He also obtained similar approximations and error bounds for the derivatives of these solutions. He also considered $u^{2}<0$ and $\alpha^{2}<0$. Our results can readily be extended to these cases, but we do not pursue those here.

Note that if the turning points are bounded away from one another, expansions at each turning point could be constructed in terms of Airy functions, using the well-known theory found in \cite[Chap. 11]{Olver:1997:ASF}. This results in simpler expansions, but the price paid is that the parameters are more restricted. The significance of Olver's paper is that his approximations are uniformly valid with respect to $\alpha \in [0,\alpha_{1}]$, and thus for the turning points bounded away from one another, as well as being allowed to coalesce into a double zero when $\alpha \to 0$ (equivalently $a \to a_{0}$). The latter situation translates into a larger permissible range for the second parameter. 

In a follow-up paper Olver applied the theory to Legendre functions in \cite{Olver:1975:LFW}, and later to Whittaker functions \cite{Olver:1980:WFW}. We too will apply the new expansions of the present paper to Legendre functions in \cref{sec:Legendre}, which extend Olver's one-term approximations valid for real argument to full expansions, and also that are valid for complex values of the argument. In our approach we do not consider the ODE (\ref{eq06}), rather we use the LG transformed equation \cref{eq03} only. However, the variable $\zeta$ given by \cref{eq02,eq08} still plays a central role in our analysis.

We obtain expansions involving $U(-\frac12 u\hat{\alpha}^2,\sqrt{2u}\,\zeta)$ and its derivative (along with other parabolic cylinder functions), where $\hat{\alpha}$ is a prescribed constant that is close in value to $\alpha$ for large $u$, but not necessarily exactly the same as that given by \cref{eq10}. The other parabolic cylinder functions that we shall use are $U(-\frac12 u\hat{\alpha}^2,-\sqrt{2u}\,\zeta)$ and $U(\frac12 u\hat{\alpha}^2,\pm i \sqrt{2u}\,\zeta)$. All four are solutions of (\ref{eq13}) with $\alpha$ replaced by $\hat{\alpha}$, and are significant because they are recessive at $\zeta = \pm \infty$ and $\zeta = \mp i\infty$, respectively.

We extend Olver's general results in several ways. Our results are valid for complex values of the argument rather than just real ones, with asymptotic expansions constructed rather than just the leading term. The coefficients in our new expansions are readily computable. We mention that Olver also showed how to obtain asymptotic expansions, but these involved very complicated expressions for the coefficients, were only shown to be valid in finite intervals, and are not valid at singularities. On the other hand, our expansions are valid in unbounded domains, as well as at certain singularities of the equation. Our results are rigorously justified with explicit and relatively simple error bounds. We also asymptotically solve the connection problem, which has applications to eigenvalue problems in quantum mechanics \cite{Barakat:2005:TAI}.

Earlier investigations into the problem of linear second-order differential equations having two turning points include those by Kazarinoff \cite{Kazarinoff:1958:ATO}, Langer \cite{Langer:1934:TAS,Langer:1959:TAS}, and McKelvey \cite{McKelvey:1955:TAS}. For other studies, see \cite{Heading:1962:PIM,Lynn:1970:UAS,Moriguchi:1959:AIO}. The results we develop here are significantly more general, and have applications to the asymptotic evaluation of various special functions, including Whittaker functions, prolate spheroidal wave functions, Bessel polynomials, and Laguerre polynomials. Physical applications include wave scattering problems for electric cylinders and prolate spheroids \cite{Kazarinoff:1958:ATO}, and the eigenfunctions of an anharmonic oscillator problem in quantum mechanics \cite{Ezawa:2012:CIM}.

The plan of this paper is as follows. In \cref{sec:LG} we construct LG solutions for \cref{eq01}, as well as establishing important connection formulas between them. These expansions are not themselves valid at $z=z_{1}$ and $z=z_{2}$, but will be used to construct asymptotic solutions that are valid at these turning points. In \cref{sec:PCF} we obtain LG expansions for the parabolic cylinder functions, and these, together with the results of \cref{sec:LG}, are used in \cref{sec:PCFExpansions} to obtain our main result, namely expansions that are valid at the turning points, uniformly for $a_{0} \leq a \leq a_{1}$. In \cref{sec:Legendre} we apply our new theory to obtain uniform asymptotic expansions for the associated Legendre functions when both the degree $\nu$ and the order $\mu$ are large. Finally in \cref{sec:numerics} we illustrate the accuracy of our uniform expansions with some numerical results from \cref{sec:Legendre} for two cases, where the turning points in question are both close and not close.

\section{LG expansions for the general equation}
\label{sec:LG}

Let points $z=x_{j}$ ($j=1,2,3,4$) in the $z$ plane correspond to $\zeta= i \infty$ ($\Re(\xi)=-\infty$), $\zeta=+\infty$ ($\Re(\xi)=+\infty$), $\zeta=-i \infty$ ($\Re(\xi)=-\infty$), and $\zeta =-\infty$ ($\Re(\xi)=+\infty$), respectively. Since $\zeta$ is unbounded at these points, the corresponding $z$ points must also be unbounded, or at a finite point that is a singularity of (\ref{eq01}). We assume that all four are boundary points of $Z$.

From \cref{eq02,eq08}, and recalling that $f(a,z)>0$ for $z<z_{1}$ and $z>z_{2}$, the points $x_{2}$ and $x_{4}$ lie on the real axis with $-\infty \leq x_{4} < z_{1} \leq z_{2} < x_{2} \leq \infty$, and $x_{1}$ and $x_{3}$ lie in the upper and lower half-planes, respectively.

At any finite point(s) $z=x_{j}$ ($j \in \{1,2,3,4\}$), which are singularities of \cref{eq01}, must have the properties:

(i) $f(a,z)$ has a pole of order $m>2$, and $g(z)$ is analytic or has a pole of order less than $\frac{1}{2}m+1$, or

(ii) $f(a,z)$ and $g(z)$ have a double pole, and 
$\left( z-x_{j}\right) ^{2}g(z) \to -\frac{1}{4}$ as $z\to x_{j}$.

On the other hand, if any of the point(s) $z=x_{j}$ ($j \in \{1,2,3,4\}$) are at infinity, then we assume that $f(a,z)$ and $g(z)$ can be expanded in convergent series in a neighborhood of $z=x_{j}$ of the forms
\begin{equation}
\label{eq17}
f(a,z)=z^{p}\sum\limits_{s=0}^{\infty }f_{s}(a)z^{-s},\ g(z)=z^{q}\sum\limits_{s=0}^{\infty }g_{s}{z}^{-s},
\end{equation}
where $f_{0}(a)\neq 0$, $g_{0}\neq 0$, and $p$ and $q$ are integers such that $p>-2$ and $q<\frac{1}{2}p-1$, or $p=q=-2$ and $g_{0}=-\frac{1}{4}$. The above requirements ensure that our LG expansions are uniformly valid as $z \to x_{j}$. For details and generalizations, see \cite[Chap. 10, \S\S 4 and 5]{Olver:1997:ASF}.

Let $Z_{j} \subset Z$ ($j=1,2,3$) comprise the point set for which there exists a path $\mathcal{L}_{j}$ in $Z$ linking $z$ with $x_{j}$ having the properties:

(i) $\mathcal{L}_{j}$ consists of a finite chain of $R_{2}$ arcs (as defined in \cite[Chap. 5, \S 3.3]{Olver:1997:ASF}); for example, a polygonal path.

(ii) $\Re(uv)$ is monotonic as $v$ passes along $\mathcal{L}_{j}$ from $x_{j}$ to $z$.

(iii) All points on the path $\mathcal{L}_{j}$ are bounded away from the turning points $z=z_{j}$ ($j=1,2$).

We shall call such paths $\mathcal{L}_{j}$ \emph{progressive}.

For each positive integer $n$ and all sufficiently large $u>0$, let $k_{n,j}(u,a)$, $j\in\{1,2,3\}$, be functions of $a$ that are continuous on $[a_0,a_1]$, chosen to satisfy certain conditions stated below. Then we apply \cite[Thm. 1.1]{Dunster:2020:LGE} to obtain the LG expansions to the solutions of (\ref{eq01}) that are recessive at $z=x_{j}$ ($j=1,2,3$), given by
\begin{equation}
\label{eq18}
w_{j}(u,a,z) 
=  \frac{k_{n,j}(u,a)W_{n,j}(u,a,z)}{f^{1/4}(a,z)}
\quad (j=1,2,3),
\end{equation}
where
\begin{equation}
\label{eq19}
W_{n,j}(u,a,z) = 
\exp \left\{u\xi+\sum\limits_{s=1}^{n-1}{
\frac{\hat{E}_{s}(a,z)}
{u^{s}}}\right\} \left\{1+\eta_{n,j}(u,a,z) \right\}
\quad (j=1,3),
\end{equation}
and
\begin{equation} 
\label{eq20}
W_{n,2}(u,a,z)  = 
\exp \left\{-u \xi+\sum\limits_{s=1}^{n-1}{(-1)^{s}
\frac{\hat{E}_{s}(a,z)}{u^{s}}}
\right\} \left\{1+\eta_{n,2}(u,a,z) \right\}.
\end{equation}
Here
\begin{equation} 
\label{eq21}
\hat{E}_{s}(a,z)=\int \hat{F}_{s}(a,z)f^{1/2}(a,z)dz 
\quad (s=1,2,3, \ldots ),
\end{equation}
where
\begin{equation} 
\label{eq22}
\hat{F}_{1}(a,z)=\frac12 \psi(a,z), \;
\hat{F}_{2}(a,z)=-\frac{1}{4f^{1/2}(a,z)}
\frac{d \psi(a,z)}{dz},
\end{equation}
with $\psi(a,z)$ given by (\ref{eq04}), and
\begin{multline} 
\label{eq23}
\hat{F}_{s+1}(a,z)=-\frac{1}{2f^{1/2}(a,z)}
\frac{d \hat{F}_{s}(a,z)}{dz}
-\frac{1}{2}\sum_{j=1}^{s-1}\hat{F}_{j}(a,z)
\hat{F}_{s-j}(a,z)
\\
\quad (s=2,3,4,\ldots).
\end{multline}
The integration constants in (\ref{eq21}) can be arbitrarily chosen, but for convenience, we assume that they are independent of $u$.

The coefficients $k_{n,j}(u,a)$ are chosen relative to certain connection formulas which we discuss later, and are such that each $w_{j}(u,a,z)$ ($j=1,2,3$) is independent of $n$ (and likewise for $w_{4}(u,a,z)$ given by \cref{eq28} below).

\begin{remark}
\label{evencoeffs}
As shown in \cite{Dunster:2017:COA} via Abel's identity, the even coefficients $\hat{E}_{2s}(a,z)$ satisfy the formal expansion
\begin{equation}
\label{even}
\sum\limits_{s=1}^{\infty }\frac{\hat{E}_{2s}(a,z) }{u^{2s}}
\sim -\frac{1}{2}\ln \left\{ 1+\sum\limits_{s=1}^{\infty}
\frac{\hat{F}_{2s-1}(a,z) }{u^{2s}}\right\} +\sum\limits_{s=1}^{\infty }\frac{h_{2s}(a)}{u^{2s}}
\quad (u \to \infty),
\end{equation}
where each $h_{2s}(a)$ can be arbitrarily chosen, and hence each $\hat{E}_{2s}(a,z)$ can be determined by expanding the RHS of \cref{even} in inverse powers of $u^{2}$, and then equating like powers. For example, $\hat{E}_{2}(a,z)=-\frac{1}{2}\hat{F}_{1}(a,z)+h_{2}(a)$. Thus, integration via (\ref{eq21}) is not required for these even coefficients.
\end{remark}

From (\ref{eq02}) and \cite[Thm. 1.1]{Dunster:2020:LGE} the error terms $\eta_{n,j}(u,a,z)$ ($j=1,2,3$) are $\mathcal{O}(u^{-n})$  as $u \to \infty$ uniformly for $z \in Z_{j}$, and satisfy the bounds in those regions
\begin{equation} 
\label{eq24}
\left\vert \eta_{n,j}(u,a,z) \right\vert 
\leq |u|^{-n}
\chi_{n,j}(u,a,z) \exp \left\{|u|^{-1}
T_{n,j}(u,a,z) +|u|^{-n}\chi_{n,j}(u,a,z) \right\},
\end{equation}
where
\begin{multline} 
\label{eq25}
\chi_{n,j}(u,a,z) 
=2\int_{x_{j}}^{z}
{\left\vert \hat{F}_{n}(a,v) \{f(a,v)\}^{1/2}dv\right\vert } 
\\ 
+\sum\limits_{s=1}^{n-1}\frac{1}{|u|^{s}}
 \sum\limits_{k=s}^{n-1}
\int_{x_{j}}^{z} {\left\vert 
\hat{F}_{k}(a,v) \hat{F}_{s+n-k-1}(a,v)
\{f(a,v)\}^{1/2}dv \right\vert },
\end{multline}
and
\begin{equation} 
\label{eq26}
T_{n,j}(u,a,z)
=4\sum\limits_{s=0}^{n-2} \frac{1}{|u|^{s}}
\int_{x_{j}}^{z}
\left\vert \hat{F}_{s+1}(a,v) \{f(a,v)\}^{1/2}dv \right\vert .
\end{equation}
The paths of integration are taken along $\mathcal{L}_{j}$, as described above. 

Next, consider the solution that is recessive at $z=x_{4}$. Since $\xi(z)$ has a branch cut along the real axis $(x_{4},z_{2}]$ we must consider two forms of the LG expansion, one which is valid above the cut, and the other below the cut. Now from (\ref{eq02}), (\ref{eq11}) and our assumptions on $f(a,z)$ we see that $\xi \to \infty \mp \frac12 \alpha^{2} \pi i$ as $z \to x_{4} \pm i0$. Now label the corresponding LG solutions by $W_{n,4}^{\pm}(u,a,z)$, respectively, and then these take the form
\begin{equation} 
\label{eq27}
W_{n,4}^{\pm}(u,a,z)  = 
\exp \left\{-u \xi+\sum\limits_{s=1}^{n-1}{(-1)^{s}
\frac{\hat{E}_{s}(a,z)}{u^{s}}}
\right\} \left\{1+\eta_{n,4}^{\pm}(u,a,z) \right\}.
\end{equation}
The only difference lies in the error terms, and these also satisfy \cref{eq24,eq25,eq26}, and where the integration paths $\mathcal{L}_{4}^{\pm}$, say, link $z$ to $x_{4} \pm i0$, while satisfying similar monotonicity conditions as the corresponding paths $\mathcal{L}_{j}^{\pm}$ ($j=1,2,3$) of the other three solutions. The two LG solutions given by (\ref{eq27}) are linearly dependent, due to their unique recessive behavior at $z=x_{4}$, but their error terms are bounded in different regions, which we label $Z_{4}^{\pm}$. 

Thus, we have as our fourth fundamental solution to (\ref{eq01}), recessive at $z=x_{4}$, given by the two forms
\begin{equation}
\label{eq28}
w_{4}(u,a,z) = 
\frac{k_{n,4}^{\pm}(u,a) W_{n,4}^{\pm}(u,a,z)}
{f^{1/4}(a,z)},
\end{equation}

Let us now examine the connection formulas between the four fundamental solutions. From linear second-order differential equation theory, we know that any one of them can be expressed as a linear combination of two of the others that are linearly independent. With this in mind, let the constants $k_{n,j}(u,a)$ ($j=1,2,3$) of \cref{eq18,eq28} be scaled relative to each other so that the linear relationship between $w_{j}(u,a,z)$ ($j=1,2,3$) is given by
\begin{equation}
\label{eq29}
w_{2}(u,a,z)
=w_{1}(u,a,z)+w_{3}(u,a,z),
\end{equation}
noting that $w_{1}(u,a,z)$ and $w_{3}(u,a,z)$ are linearly independent for our parameter range.

Given these values, we then express the linear relation between $w_{j}(u,a,z)$ ($j=1,3,4$) in the form
\begin{equation}
\label{eq30}
w_{4}(u,a,z)
=e^{\lambda \pi i}w_{1}(u,a,z)
+e^{-\lambda \pi i} w_{3}(u,a,z),
\end{equation}
for some $\lambda$, which is not necessarily real. Thus, from these two relations
\begin{equation}
\label{eq31}
2i\sin(\lambda\pi) w_{1}(u,a,z)
=w_{4}(u,a,z)
-e^{-\lambda \pi i} w_{2}(u,a,z),
\end{equation}
and
\begin{equation}
\label{eq32}
2i\sin(\lambda\pi)w_{3}(u,a,z)
=e^{\lambda \pi i} w_{2}(u,a,z)-w_{4}(u,a,z).
\end{equation}

As $z \to x_{1}$ we have $w_{1}(u,a,z) \to 0$ and $w_{j}(u,a,z)$ ($j=2,4$) unbounded. Now, the LG expansions \cref{eq19,eq27} are both asymptotically valid at $z = x_{1}$, provided that in the latter upper signs are taken. Thus, with these expansions, along with (\ref{eq31}), we deduce that
\begin{equation}
\label{eq33}
k_{n,4}^{+}(u,a)
=e^{-\lambda \pi i} k_{n,2}(u,a)
\left\{1+\mathcal{O}\left(u^{-n}\right)\right\}
\quad (u \to \infty).
\end{equation}
Similarly, on letting $z \to x_{3}$ in (\ref{eq32}),
\begin{equation}
\label{eq34}
k_{n,4}^{-}(u,a)
=e^{\lambda \pi i} k_{n,2}(u,a)
\left\{1+\mathcal{O}\left(u^{-n}\right)\right\}
\quad (u \to \infty).
\end{equation}

Next, from (\ref{eq29}) and letting $z \to x_{2}$
\begin{equation}
\label{eq35}
k_{n,1}(u,a)
=-k_{n,3}(u,a)
\left\{1+\mathcal{O}\left(u^{-n}\right)\right\}
\quad (u \to \infty).
\end{equation}

The following theorem provides an important asymptotic expansion for the connection coefficient $\lambda$ that appears in \cref{eq30}.
\begin{theorem}
\label{thm:lambda}
As $u \to \infty$
\begin{equation}
\label{eq36}
\lambda + \frac12
\sim \frac12 u \alpha^{2} 
+\frac{1}{2\pi i}\sum\limits_{s=0}^{\infty}{
\frac{\hat{E}_{2s+1}\left(a,x_{4} - i0\right)
-\hat{E}_{2s+1}\left(a,x_{4} + i0\right)}{u^{2s+1}}},
\end{equation}
where $\alpha$ is given by \cref{eq11}.
\end{theorem}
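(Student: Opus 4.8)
The plan is to read off $\lambda$ from the ratio of the two connection relations \cref{eq33,eq34}. Dividing one by the other eliminates the unknown factor $k_{n,2}(u,a)$ and gives
\[
\frac{k_{n,4}^{+}(u,a)}{k_{n,4}^{-}(u,a)} = e^{-2\lambda\pi i}\left\{1 + \mathcal{O}(u^{-n})\right\} \quad (u\to\infty),
\]
so it suffices to evaluate the left-hand ratio. For this I would use that $w_4(u,a,z)$ is a genuinely single-valued solution of \cref{eq01} (the turning points $z_1,z_2$ are ordinary points of the equation), whereas its two LG representations \cref{eq28} employ the values of $\xi$, of the $\hat{E}_s(a,z)$, and of $f^{1/4}(a,z)$ on opposite sides of the branch cut $(x_4,z_2]$. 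Equating the two boundary values of $w_4$ at a point $z$ on $(x_4,z_1)$, where $w_4$ is analytic, and then letting $z\to x_4$ gives
\[
\frac{k_{n,4}^{+}}{k_{n,4}^{-}} = \left(\frac{f^{1/4}(a,x_4+i0)}{f^{1/4}(a,x_4-i0)}\right)\lim_{z\to x_4}\frac{W_{n,4}^{-}(u,a,z)}{W_{n,4}^{+}(u,a,z)},
\]
where the error terms $\eta_{n,4}^{\pm}$ drop out since the integration paths in \cref{eq25,eq26} emanate from $x_4$.

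Next I would compute the three jumps across the cut separately. From \cref{eq19,eq27}, the exponential part of $W_{n,4}^{-}/W_{n,4}^{+}$ is controlled by the jump of $-u\xi$ together with the jumps of the $\hat{E}_s$. Using \cref{eq02,eq11} and the stated limits $\xi\to\infty\mp\tfrac12\alpha^2\pi i$ as $z\to x_4\pm i0$, the purely imaginary difference $\xi(a,x_4-i0)-\xi(a,x_4+i0)=\alpha^2\pi i$ supplies the leading term $\tfrac12 u\alpha^2$, the divergent real parts cancelling in the ratio. For the $\hat{E}_s$ I would invoke the parity structure of the recursion \cref{eq22,eq23}: an induction shows that $\hat{F}_{2s-1}$ is a rational function of $f,g$ and their derivatives (an even power of $f^{1/2}$), hence single-valued, while $\hat{F}_{2s}$ carries an odd power of $f^{1/2}$. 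Consequently the integrand $\hat{F}_s f^{1/2}$ of \cref{eq21} is single-valued for even $s$ and multivalued for odd $s$; equivalently, by \cref{evencoeffs} and \cref{even}, the even coefficients $\hat{E}_{2s}$ are single-valued and do not jump, so only the odd coefficients $\hat{E}_{2s+1}$ survive. Collecting these and using $(-1)^{2s+1}=-1$ reproduces exactly the series $\tfrac{1}{2\pi i}\sum_{s\ge 0}\{\hat{E}_{2s+1}(a,x_4-i0)-\hat{E}_{2s+1}(a,x_4+i0)\}u^{-2s-1}$.

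The remaining, and decisive, ingredient is the jump of the prefactor $f^{1/4}(a,z)$. Although the branch of $f^{1/2}$ is \emph{continuous} across $(x_4,z_1)$ — continuation through either half-plane brings it to the same negative real value $-\{f\}^{1/2}$ there — that value is attained along the arguments $+\pi$ and $-\pi$ respectively, so the outer square root $f^{1/4}=(f^{1/2})^{1/2}$ has boundary values differing by a sign,
\[
\frac{f^{1/4}(a,x_4+i0)}{f^{1/4}(a,x_4-i0)} = \frac{i\,|f|^{1/4}}{-i\,|f|^{1/4}} = e^{i\pi}.
\]
This turning-point (Maslov-type) phase is precisely what produces the constant $-\tfrac12$ on the left of \cref{eq36}. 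Assembling the three contributions into $e^{-2\lambda\pi i}$, taking logarithms, and dividing by $-2\pi i$ then yields the claimed expansion.

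I expect the main obstacle to be fixing this last phase \emph{unambiguously}: the relation $k_{n,4}^{+}/k_{n,4}^{-}=e^{-2\lambda\pi i}$ determines $\lambda$ only modulo $1$, so the half-integer $-\tfrac12$ must come from a careful continuous tracking of the arguments of $f^{1/2}$ and $f^{1/4}$ through both turning points (verifying that the factor is $e^{i\pi}$ and not $e^{-i\pi}$), with the residual integer pinned down by continuity in $u$ and agreement with the known leading behaviour. The parity argument guaranteeing that the even coefficients disappear is the other step requiring care.
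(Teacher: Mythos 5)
Your proposal is correct, and it reaches \cref{eq36} by a genuinely different decomposition than the paper's, even though the analytic ingredients coincide. The paper stays inside \cref{eq30}: letting $z\to x_{4}$ there gives $e^{2\lambda\pi i}=-\lim_{z\to x_{4}}w_{3}/w_{1}$ (its \cref{eq37}), which is evaluated using the LG expansions of $w_{1}$ and $w_{3}$ at $x_{4}\pm i0$ together with $k_{n,1}=-k_{n,3}\{1+\mathcal{O}(u^{-n})\}$ from \cref{eq35} and the quarter-power phase \cref{eq39}. You instead divide \cref{eq33} by \cref{eq34} to get $k_{n,4}^{+}/k_{n,4}^{-}=e^{-2\lambda\pi i}\{1+\mathcal{O}(u^{-n})\}$, and then compute that ratio exactly from the single-valuedness of $w_{4}$ across the cut $(x_{4},z_{1})$. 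Both routes rest on the same three facts: the jump $\xi(x_{4}-i0)-\xi(x_{4}+i0)=\alpha^{2}\pi i$, the absence of a jump in the even coefficients $\hat{E}_{2s}$, and a factor $-1$ from the quarter powers of $f$; your bookkeeping simply relocates where the minus sign and the $\mathcal{O}(u^{-n})$ factors enter. A pleasant feature of your version is that the monodromy identity for $k_{n,4}^{+}/k_{n,4}^{-}$ is exact, since $\eta_{n,4}^{\pm}\to 0$ as $z\to x_{4}\pm i0$ (the integration paths in \cref{eq25,eq26} shrink to a point), so all asymptotic error is confined to \cref{eq33,eq34}; the price is that you rely on those two relations, which the paper's proof does not need. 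One caution: the parity argument alone ("the integrand $\hat{F}_{2s}f^{1/2}$ is single-valued, hence $\hat{E}_{2s}$ does not jump") is not sufficient, because the two boundary values of $\hat{E}_{2s}$ at $x_{4}$ differ by a loop integral around $[z_{1},z_{2}]$, which for a single-valued meromorphic integrand equals $2\pi i$ times a sum of residues at the turning points and is not automatically zero. What closes this is your parallel appeal to \cref{evencoeffs} and \cref{even}: these exhibit $\hat{E}_{2s}$ directly as a polynomial in the single-valued functions $\hat{F}_{2j-1}$ plus a constant, hence meromorphic at $z_{1},z_{2}$ and jump-free — which is exactly the paper's own justification, so you should lean on that and not on the integrand's parity. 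Finally, the modulo-$1$ ambiguity you flag as the main obstacle is present in the paper's argument in identical form and is resolved there purely by convention ("modulo an arbitrary integer, which we take to be zero"), so it is not an additional burden on your approach.
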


\begin{proof}
From (\ref{eq30}) and letting $z \to x_{4}$
\begin{equation}
\label{eq37}
e^{2\lambda \pi i}
=-\lim_{z \to x_{4}}\frac{w_{3}(u,a,z)}
{w_{1}(u,a,z)}.
\end{equation}
In this we use the LG expansions (\ref{eq19}) and take $z \to x_{4} \pm i0$ ($\xi \to \infty \mp \frac12 \alpha^{2} \pi i$) for $w_{1}(u,a,z)$ and $w_{3}(u,a,z)$, respectively. Then on referring to (\ref{eq35}), for any positive integer $n$, we arrive at
\begin{multline}
\label{eq38}
e^{2\lambda \pi i}
=-\exp \left\{u \alpha^{2} \pi i
+\sum\limits_{s=1}^{n-1}{
\frac{\hat{E}_{s}\left(a,x_{4} - i0\right)
-\hat{E}_{s}\left(a,x_{4} + i0\right)}
{u^{s}}}\right\} 
\\ \times
\left\{1+\mathcal{O}\left(\frac{1}{u^{n}}\right)\right\}
\quad (u \to \infty),
\end{multline}
on noting that
\begin{equation}
\label{eq39}
\frac{\lim_{z \to x_{4}-i0}f^{-1/4}(u,z)}
{\lim_{z \to x_{4}+i0}f^{-1/4}(u,z)}=-1,
\end{equation}
which follows from $f^{-1/4}(u,z)=\{(z-z_{1})(z-z_{2})\}^{-1/4}p(a,z)$, where $p(a,z)$ is analytic in $Z$. Now, from \cref{eq04,eq22,eq23,even}, one can show by induction that the even coefficients $\hat{E}_{2s}(a,z)$ ($s=1,2,3,\dots$) are meromorphic at the turning points $z= z_{j}$ ($j=1,2$), and so $\hat{E}_{2s}(a,x_{4} + i0)=\hat{E}_{2s}(a,x_{4} - i0)$. Thus (\ref{eq38}) yields \cref{eq36} (modulo an arbitrary integer, which we take to be zero)
\end{proof}

We finish with a result that will be referred to later.
\begin{lemma}
\label{lem:ratio}
For arbitrary $\delta>0$ assume that $a_{0}+\delta \leq a \leq a_{1}$. Then for any positive integer $n$
\begin{equation}
\label{eq40}
\frac{k_{n,1}(u,a)}{k_{n,2}(u,a)}
= i \exp\left\{\sum_{s=0}^{n-1}
\frac{\kappa_{2s+1}(a)}{u^{2s+1}} \right\}
\left\{1+ \mathcal{O}
\left(\frac{1}{u^{n}}\right)\right\}
\quad (u \to \infty),
\end{equation}
for certain constants $\kappa_{2s+1}(a)$ which depend on the odd coefficients in (\ref{eq21}).
\end{lemma}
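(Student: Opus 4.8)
The obstruction to be overcome is that $w_1$ is of the ``$+$'' type, growing like $e^{u\xi}$, whereas $w_2$ is of the ``$-$'' type, growing like $e^{-u\xi}$; wherever $f(a,z)>0$ these two exponentials are reciprocal, so their coefficients cannot be matched by comparing moduli. Furthermore, the ratio cannot be obtained from the algebraic relations \cref{eq29,eq30} and the boundary relations \cref{eq33,eq34,eq35} alone, together with constant Wronskians: using \cref{eq18,eq19,eq20} and $\hat{E}_s'=\hat{F}_s f^{1/2}$, every Wronskian of a ``$+$'' solution with a ``$-$'' solution collapses to $-2u\,k_{(+)}k_{(-)}\exp\{2\sum_s h_{2s}(a)/u^{2s}\}\{1+\mathcal{O}(u^{-n})\}$, which fixes only the \emph{product} of the coefficients, while \cref{eq33,eq34,eq35} are homogeneous and merely reshuffle the same data. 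The relative scale of the two families is pinned instead by the \emph{exact} identity \cref{eq29}, and to read it off one must pass to a region where $w_1$, $w_2$, $w_3$ become genuinely comparable. The natural such region is the oscillatory interval $(z_1,z_2)$, where $f(a,z)<0$, $\xi$ is purely imaginary, and $e^{\pm u\xi}$ are both oscillatory of modulus $\mathcal{O}(1)$. This is precisely why the hypothesis $a_0+\delta\le a\le a_1$ is needed: for $\delta>0$ the turning points stay a fixed distance apart, so there is room to use the (turning-point-avoiding) LG expansions right up to a fixed neighbourhood of each $z_j$; this room vanishes as $a\to a_0$.

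The plan is to combine \cref{eq35} with the connection of the LG solutions across the single simple turning point $z_2$, which is isolated for $\delta>0$. On $(z_1,z_2)$ the solutions $w_1$ and $w_3$, carried in respectively from the upper and lower half-planes, are oscillatory with the two opposite phases $e^{\mp iu\Theta}$, where $\Theta=\int_{z}^{z_2}\{-f(a,t)\}^{1/2}dt$ and the two branches differ by the monodromy of $\xi$, an amount governed by $\int_{z_1}^{z_2}\{-f(a,t)\}^{1/2}dt=\tfrac12\alpha^2\pi$ from \cref{eq10}. I would then continue the combination $w_1+w_3$ across $z_2$ into the region $z>z_2$, where $f>0$, and impose that the result is recessive there, i.e. equal to $w_2$; this single-turning-point connection both reproduces \cref{eq35} and determines the coefficient with which the recessive solution emerges, thereby expressing $k_{n,2}$ in terms of $k_{n,1}$. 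Dividing gives $k_{n,1}/k_{n,2}$.

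The announced form then follows from two observations. First, the quotient of a ``$+$'' LG amplitude by a ``$-$'' one is $\exp\{2u\xi+2\sum_s\hat{E}_{2s-1}(a,z)/u^{2s-1}\}$, in which the even coefficients $\hat{E}_{2s}$ cancel while the odd coefficients survive, doubled; this is the source of the purely odd exponent $\sum_s\kappa_{2s+1}(a)/u^{2s+1}$, the constants $\kappa_{2s+1}(a)$ being the relevant branch-jump (turning-point limit) values of the odd coefficients, exactly as the odd coefficients entered \cref{thm:lambda} through $\hat{E}_{2s+1}(a,x_4\mp i0)$. Second, the unimodular prefactor $i=e^{i\pi/2}$ is produced by the $f^{1/4}$ branch factor incurred in rounding $z_2$ (cf. \cref{eq39}) together with the standard turning-point phase; the $\alpha^2$-dependent pieces that appear in \cref{eq33,eq34,thm:lambda} cancel out of this particular ratio, leaving the bare constant $i$. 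The error is $\mathcal{O}(u^{-n})$ uniformly, inherited from the bounds \cref{eq24,eq25,eq26} on $\eta_{n,1},\eta_{n,2},\eta_{n,3}$, which are uniform once $z$ is kept a fixed distance from $z_1,z_2$ and $a\in[a_0+\delta,a_1]$.

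I expect the branch bookkeeping to be the crux. Correctly tracking $\xi$ and each odd coefficient $\hat{E}_{2s-1}$ as the representation of $w_3$ is carried from the lower half-plane and $w_1$ from the upper half-plane, and keeping the $f^{1/4}$ phase and the $\tfrac12\alpha^2\pi$ accumulated phase mutually consistent, is what fixes the prefactor as $i$ rather than some other unimodular constant and what determines the $\kappa_{2s+1}(a)$. Since this is the same Stokes-type accounting that underlies \cref{thm:lambda}, I would use \cref{thm:lambda} and \cref{eq33,eq34} as an independent check on the final phase; a sign or branch slip here would corrupt both the prefactor and the coefficients simultaneously, so this is where the care must go.
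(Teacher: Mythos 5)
Your structural reading of the problem matches the paper's in several respects: the ratio cannot come from Wronskians (which fix only the product $k_{n,1}k_{n,2}$); it must come from evaluating the exact relation \cref{eq29} where the two families can genuinely be compared; the hypothesis $a_{0}+\delta\leq a$ is needed precisely to leave room between the turning points; the even coefficients $\hat{E}_{2s}$ must drop out, leaving a purely odd exponent; and the prefactor $i$ is a branch phenomenon of $f^{-1/4}$. All of this is correct and is reflected in the paper's argument. Where you part ways is the central analytic step, and there your proposal has a genuine gap: you invoke ``the single-turning-point connection'' across $z_{2}$ --- a formula relating the recessive normalization $k_{n,2}$ of $w_{2}$ in $z>z_{2}$ to its oscillatory form on $(z_{1},z_{2})$, valid to all orders with relative error $\mathcal{O}(u^{-n})$ --- as if it were an available tool. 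It is not. The only asymptotic machinery in the paper is \cite[Thm. 1.1]{Dunster:2020:LGE}, whose error bounds hold along progressive paths, and the oscillatory interval does not lie in $Z_{2}$: along any path from $x_{2}$ around $z_{2}$ to a point of $(z_{1},z_{2})$, $\Re(u\xi)$ first decreases through zero to negative values (upon crossing the anti-Stokes ray $\arg(z-z_{2})=\pm\pi/3$) and then climbs back to zero at the interval, so no progressive path exists and the LG expansion of $w_{2}$ carries no error bound there. Supplying the missing connection formula rigorously, for full expansions rather than leading order, is a task of the same magnitude as the turning-point theory the paper is constructing (uniform Airy-type asymptotics); alternatively, a lateral continuation argument around $z_{2}$, carried out within the paper's machinery, essentially collapses into the paper's own proof. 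Note also that the direction in which you need the connection (from oscillatory data to the recessive coefficient) is the classically delicate one; it is rescued here only because recessiveness of $w_{1}+w_{3}$ is known exactly, and the rigorous statement one then needs is exactly the all-orders recessive-to-oscillatory connection that is missing.

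The paper avoids any turning-point connection. It lets $z\to x_{3}$ in \cref{eq29}: there $w_{3}\to 0$, so $1=\lim_{z\to x_{3}}w_{1}/w_{2}$. The expansion of $w_{2}$ is valid at $x_{3}$, since $x_{3}\in Z_{2}$ (a progressive path from $x_{2}$ runs through the lower half-plane), while the expansion of $w_{1}$ is continued from $x_{1}$ along a progressive path that crosses the cut $[z_{1},z_{2}]$ --- this is where $\delta>0$ enters, exactly as you anticipated --- onto a sheet where $\xi\to+\infty-\tfrac14\alpha^{2}\pi i$, so that $\Re(u\xi)$ is monotone along the entire path and the LG form of $w_{1}$ is valid and dominant at $x_{3}^{-}$. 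In the resulting ratio the factor $i$ comes from the branch jump \cref{eqA2}, the $\pm u\xi$ limits and the $\alpha^{2}$ terms cancel, the even coefficients cancel by single-valuedness at the turning points, and the odd ones survive as $\kappa_{2s+1}(a)=-\{\hat{E}_{2s+1}(a,x_{3}^{-})+\hat{E}_{2s+1}(a,x_{3})\}$ (see \cref{eqA1,eqA2,eqA3}). So your instincts about what the answer is made of are sound, but your route to it rests on an unproved connection step that the paper's framework does not supply, whereas the paper's route needs nothing beyond the LG error bounds it already has.
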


\begin{proof}

We use (\ref{eq29}), and in this relation let $z \to x_{3}$, in which case $w_{3}(u,a,z) \to 0$, while the other two functions grow exponentially, but whose ratio is bounded in this limit. We use the LG expansions \cref{eq18,eq19,eq20} for $w_{1}(u,a,z)$ and $w_{2}(u,a,z)$ for an arbitrary positive integer $n$. To this end, observe from \cref{eq09} that $\xi \to -\infty + \frac14 \alpha^{2} \pi i$ as $z \to x_{3}$ ($\zeta \to -i \infty$). This can be used for the LG expansion of $w_{2}(u,a,z)$, since $x_{3}$ lies in the LG region of validity, $Z_{2}$, for $W_{n,2}(u,z)$.

The determination of the corresponding LG limit as $z \to x_{3}$ for $w_{1}(u,a,z)$ is not quite as straightforward, since the variable must be accessed across the cut $[z_{1},z_{2}]$ in the $z$ plane, as this is the only way to enter the lower half-plane along a progressive path $\mathcal{L}_{1}$ used in the error bound. This path can then be chosen to satisfy the condition that $\Re(u\xi(v))$ be monotonic for points $v$ moving along the path from the end points $x_{1}$ to $x_{3}$. In addition, the path must be bounded away from both turning points as it passes through $[z_{1},z_{2}]$, and that is why we require them to be bounded away from each other, given by the condition $a_{0}< a_{0}+\delta \leq a$.

Consider then $z \to x_{3}$ from crossing the part of the cut $[z_{1},z_{2}]$ from above, and label this $z=x_{3}^{-}$. Then from \cref{eq09} we find $\xi \to +\infty - \frac14 \alpha^{2} \pi i $ as $z \to x_{3}^{-}$ on this sheet. Thus, on using the fact that $w_{3}(u,a,z)/w_{2}(u,a,z) \to 0$ as $z \to x_{3}$, we have from \cref{eq18,eq29}
\begin{equation}
\label{eqA1}
1=\lim_{z \to x_{3}}\frac{w_{1}(u,a,z)}
{w_{2}(u,a,z)}
=\frac{k_{n,1}(u,a)\lim_{z \to x_{3}^{-}}
f^{-1/4}(u,z)W_{n,1}(u,a,z)}
{k_{n,2}(u,a)\lim_{z \to x_{3}}f^{-1/4}(u,z)
W_{n,2}(u,a,z)}.
\end{equation}

Next, similarly to \cref{eq39}, we have
\begin{equation}
\label{eqA2}
\frac{\lim_{z \to x_{3}^{-}}f^{-1/4}(u,z)}
{\lim_{z \to x_{3}}f^{-1/4}(u,z)}=-i.
\end{equation}
Further, since $\xi \to \pm \infty \mp \frac14 \alpha^{2} \pi i $ as $z \to x_{3}^{-}$ and $z \to x_{3}$, respectively, from \cref{eq19,eq20} it is seen that
\begin{multline} 
\label{eqA3}
\frac{\lim_{z \to x_{3}^{-}}W_{n,1}(u,a,z)}
{\lim_{z \to x_{3}}W_{n,2}(u,a,z)}
=\exp \left\{\sum\limits_{s=1}^{n-1}{
\frac{\hat{E}_{s}(a,x_{3}^{-})+(-1)^{s-1}\hat{E}_{s}(a,x_{3})}{u^{s}}}\right\} 
\\ \times
\left\{1+\mathcal{O}
\left(\frac{1}{u^{n}}\right) \right\}
\quad (u \to \infty).
\end{multline}
Since the even coefficients $\hat{E}_{2s}(a,z)$ ($s=1,2,3,\dots$) are meromorphic at the turning points $z= z_{j}$ ($j=1,2$), and therefore single-valued at $z=x_{3}$, it follows that $\hat{E}_{2s}(a,x_{3}^{-})-\hat{E}_{2s}(a,x_{3})=0$. Thus, \cref{eqA1,eqA2,eqA3} yields \cref{eq40}, where $\kappa_{2s+1}(a)=-\{\hat{E}_{2s+1}(a,x_{3}^{-})+\hat{E}_{2s+1}(a,x_{3})\}$.
\end{proof}

\section{LG expansions for parabolic cylinder functions}
\label{sec:PCF}

Here we present relevant results for parabolic cylinder functions, and then derive LG expansions for the ones that we shall use in our uniform asymptotic expansions for solutions of (\ref{eq01}). A comprehensive asymptotic study of these functions was given by \cite{Dunster:2021:UAP} for the case where the parameter is large. These involved LG and Airy function expansions, which in combination are valid for all values of the complex argument, including those at turning points. Here we consider the case where the parameter of the parabolic cylinder functions, namely $\pm \frac12 u \hat{\alpha}^{2}$, can be both small and large in absolute value. We do not require, or obtain, expansions that are valid at the turning points of the equation they satisfy.

Firstly, the following Wronskians hold (see \cite[Eqs. 12.2.10, 12.2.11 and 12.2.12]{NIST:DLMF}):
\begin{equation} 
\label{eq41}
\mathscr{W}\{U(-b,z),U(-b,-z)\}
=\frac{\sqrt{2\pi}}
{\Gamma\left(\tfrac{1}{2}-b\right)},
\end{equation}
\begin{equation}
\label{eq42}
\mathscr{W}\{U(b,z),V(b,z)\}
=\sqrt{2 /\pi},
\end{equation}
\begin{equation}
\label{eq43}
\mathscr{W}\{U(-b,z),U(b,\pm iz)\}
=\mp i e^{\mp \frac{1}{4}(2b-1)\pi i},
\end{equation}
and
\begin{equation} 
\label{eq44}
\mathscr{W}\{U(-b,-z),U(b,\pm iz)\}
=\pm i e^{\pm \frac{1}{4}(2b-1)\pi i}.
\end{equation}

Next, connection formulas are given as follows (\cite[Eqs. 12.2.13, 12.2.18, 12.2.19 and 12.2.20]{NIST:DLMF})
\begin{equation}
\label{eq45}
\sqrt{2\pi}U(-b,\pm z)
=\Gamma\left(b+\tfrac{1}{2}\right)
\left\{e^{\frac14(2b-1)\pi i}U(b,\pm iz) 
+ e^{-\frac14(2b-1)\pi i}
U\left(b, \mp iz\right)\right\},
\end{equation}
\begin{equation} 
\label{eq46}
U(-b,-z)=\mp ie^{\pm i\pi b}U(-b,z)
+\frac{\sqrt{2\pi}}
{\Gamma\left(\tfrac{1}{2}-b\right)}
e^{\pm \frac14(2b+1)\pi i}
U\left(b,\pm iz\right),
\end{equation}
and in the special case where the $U$ function is recessive at both $z=\pm \infty$
\begin{equation} 
\label{eq47}
U\left(-n-\tfrac12,-z\right)=(-1)^{n}U\left(-n-\tfrac12,z\right)
\quad (n=0,1,2,\ldots).
\end{equation}
In addition, we have the solution given by
\begin{equation} 
\label{eq48}
V(-b,z)=\frac{1}{\sqrt{2\pi}}
\left\{e^{\frac{1}{4}(2b+1)\pi i}U(b,iz)
+e^{-\frac{1}{4}(2b+1)\pi i}U(b,-iz)\right\}.
\end{equation}
We note that $U(-b,\pm z)$ and $V(-b,\pm z)$ are real-valued numerically satisfactory solutions of (\ref{eq14}) for $z \geq 0$ and $z \leq 0$, respectively.

Now consider the differential equation (\ref{eq13}), with $\alpha$ replaced by $\hat{\alpha}$ and $Y$ replaced by $\hat{Y}$,  namely
\begin{equation} 
\label{eq48a}
\frac{d^{2}\hat{Y}}{d \zeta^{2}}=u^{2}\left(\zeta^{2}
-\hat{\alpha}^2\right)\hat{Y}.
\end{equation}
At this stage, $\hat{\alpha}$ is regarded as an arbitrary nonnegative bounded parameter. As we mentioned in \cref{sec:Introduction}, this has solutions $U(-\frac12 u\hat{\alpha}^2,\pm \sqrt{2u}\,\zeta)$ and $U(\frac12 u\hat{\alpha}^2,\pm i \sqrt{2u}\,\zeta)$. 

Then, similarly to (\ref{eq02}), using \cite[Eq. (1.3)]{Dunster:2020:LGE} with $z=\zeta$ and $f=\zeta^2-\hat{\alpha}^{2}$, we introduce the LG variable given by
\begin{multline} 
\label{eq49}
\hat{\xi}
= \int_{\hat{\alpha}}^{\zeta}\left(t^2
-\hat{\alpha}^{2}\right)^{1/2} dt
=\frac{1}{2}\zeta\left(\zeta^{2}-\hat{\alpha}^{2}\right)^{1/2}
\\
-\frac{1}{2}\hat{\alpha}^{2}\ln\left\{\zeta+\left(\zeta^{2}-\hat{\alpha}^{2}\right)^{1/2}\right\}
+\frac{1}{2}\hat{\alpha}^{2}\ln(\hat{\alpha}).
\end{multline}
As $\zeta \to \infty$ we find that $\hat{\xi} \to \infty$, such that

\begin{figure}
 \centering
 \includegraphics[
 width=1.0\textwidth,keepaspectratio]{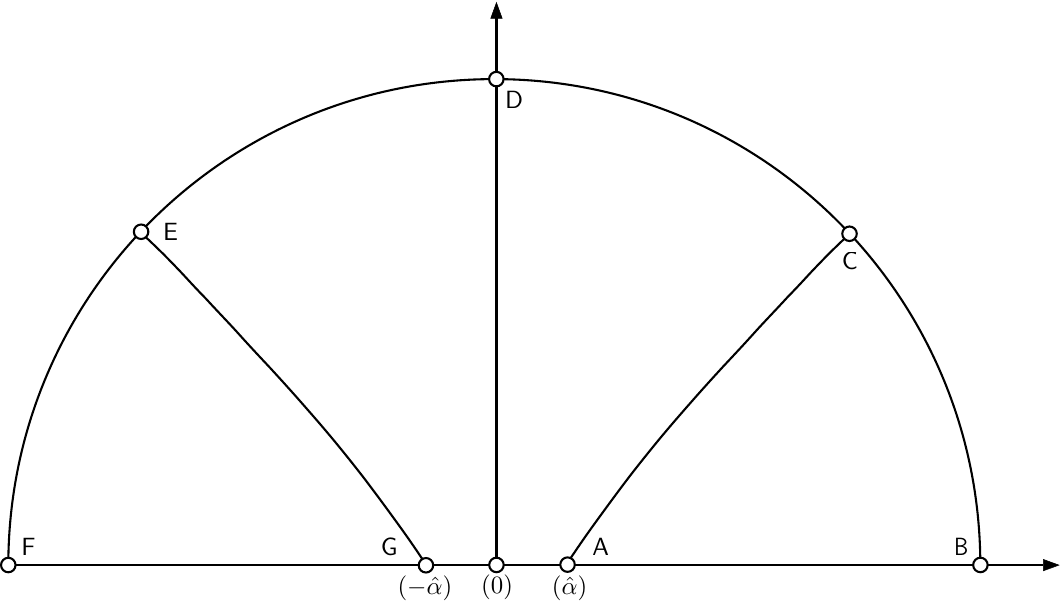}
 \caption{$\zeta$ plane.}
 \label{fig:zeta}
\end{figure}

\begin{figure}
 \centering
 \includegraphics[
 width=1.0\textwidth,keepaspectratio]{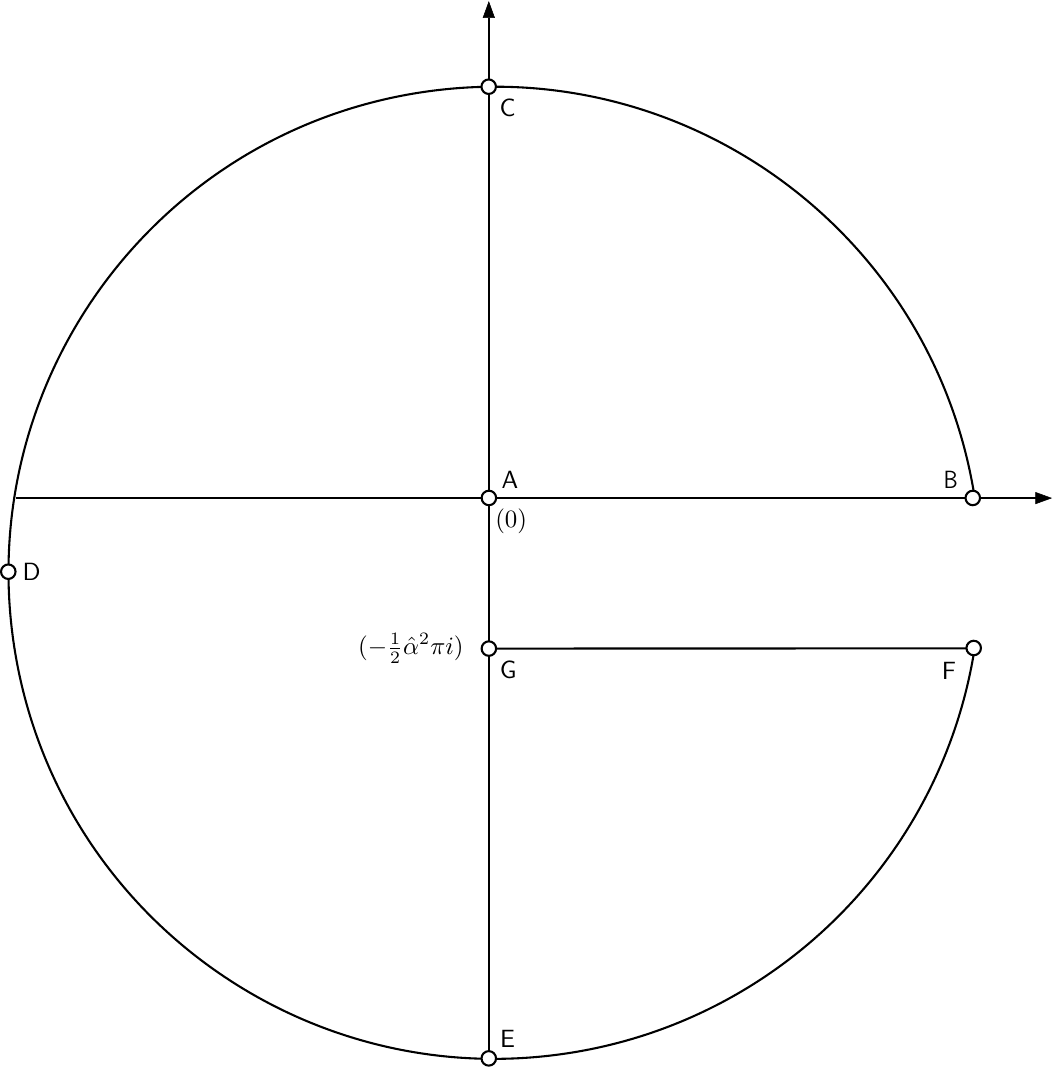}
 \caption{$\hat{\xi}$ plane.}
 \label{fig:xi}
\end{figure}

\begin{equation} 
\label{eq50}
\hat{\xi}=\frac12\zeta^{2}-\frac{1}{4}\hat{\alpha}^{2}
\left\{2\ln\left(\frac{2\zeta}{\hat{\alpha}}\right)+1\right\}
+\mathcal{O}\left(\frac{1}{\zeta^{2}}\right).
\end{equation}

The map of the upper $\zeta$ half plane to the $\hat{\xi}$ plane is shown in \cref{fig:zeta,fig:xi} with certain corresponding points labeled. The map of the lower half-plane is the conjugate of this. Thus, for example, at the points labeled $\mathsf{F}$, we observe $\hat{\xi} \to \infty -\frac12 \hat{\alpha}^2 \pi i$ as $\zeta \to -\infty + i0$ (see also (\ref{eq50})). Similarly $\hat{\xi} \to \infty +\frac12 \hat{\alpha}^2 \pi i$ as $\zeta \to -\infty - i0$ in the conjugate regions.

In \cref{sec:PCFExpansions}, our subsequent application to solutions of (\ref{eq01}) at the turning points of that equation, we shall take $\zeta$ to be defined by (\ref{eq08}), which will depend on $z$ and $\alpha$ (as given by \cref{eq11}). However, in what follows in this section, we consider it to be a general independent variable.

Now, let $\Theta_{1}$ be the upper half of the $\zeta$ plane ($\Im(\zeta) \geq 0$) with small neighborhoods of the turning points $\zeta = \pm \hat{\alpha}$ excluded, and $\Theta_{3}$ the conjugate region. Let $\Theta_{2}$ be the region in the $\zeta$ plane where $|\arg(\zeta)| \leq \frac34 \pi$ with a small neighborhood of the interval $[0,\hat{\alpha}]$ excluded. Let $\Theta_{4}^{+}$ be the region in the $\zeta$ plane where $\frac14 \pi \leq \arg(\zeta) \leq  \pi$ with a small neighborhood of the interval $[-\hat{\alpha},0]$ excluded, and finally let $\Theta_{4}^{-}$ be the conjugate of this. These are regions in which our respective LG solutions of \cref{eq48a} will be valid, and they all exclude the turning points as required. They are not maximal regions of asymptotic validity, but rather ones that are sufficiently large for our purposes.

On examination of \cref{fig:zeta,fig:xi} we see that the $\hat{\xi}$ region corresponding to $\Theta_{1}$ consists of all points shown in the illustrated part of the $\hat{\xi}$ plane, except for points at or near $\hat{\xi}=0$ and $\hat{\xi}=-\frac12 \hat{\alpha}^{2} \pi i$ (corresponding to the turning points $\zeta = \hat{\alpha}$ and $\zeta = -\hat{\alpha}+i0$). All points in this region can be accessed by a polygonal path linking them to $\hat{\xi} = - \infty$ (corresponding to $\zeta= i \infty$) in which $\Re(u\hat{\xi})$ is monotonic. Similarly, the $\hat{\xi}$ map of $\Theta_{3}$ consists of all points that can be accessed by a polygonal path linking them to $\hat{\xi} = - \infty$ (corresponding to $\zeta= -i \infty$) in which $\Re(u\hat{\xi})$ is again monotonic.

Likewise, the $\hat{\xi}$ region corresponding to $\Theta_{2}$ consists of points that can be accessed by a polygonal path linking them to $\hat{\xi} = +\infty$ (corresponding to $\zeta= +\infty$) in which $\Re(u\hat{\xi})$ is monotonic. Finally, $\hat{\xi}$ regions corresponding to $\Theta_{4}^{\pm}$ consist of points that can be accessed by a polygonal path that links them to $\hat{\xi} = \infty \mp \frac12 \hat{\alpha}^2 \pi i$ (corresponding to $\zeta= -\infty \pm i0$) in which $\Re(u\hat{\xi})$ is monotonic.

Now, from \cite[Eq. (1.6)]{Dunster:2020:LGE} with $z=\zeta$, $f=\zeta^2-\hat{\alpha}^{2}$ and $g=0$, one finds that the Schwarzian, which we denote by $\hat{\Psi}(\hat{\alpha},\zeta)$, is given by
\begin{equation} 
\label{eq51}
\hat{\Psi}(\hat{\alpha},\zeta)
=-\frac{3\zeta^{2}+2\hat{\alpha}^{2}}
{4\left(\zeta^{2}-\hat{\alpha}^{2}\right)^{3}}.
\end{equation}

Next, in order to evaluate the coefficients given by \cite[Eqs. (1.9) - (1.12)]{Dunster:2020:LGE}, we find it helpful to introduce the variable
\begin{equation} 
\label{eq52}
\hat{\beta}=\frac{\zeta}{\hat{\alpha}^{2}\left(\zeta^{2}-\hat{\alpha}^{2}\right)^{1/2}}-\frac{1}{\hat{\alpha}^{2}}
\quad (\hat{\alpha}>0),
\end{equation}
where the branch of the square root is positive for $\zeta>\hat{\alpha}$ and is continuous in the plane having a cut along $[-\hat{\alpha},\hat{\alpha}]$. In the limit $\hat{\alpha} \to 0$
\begin{equation} 
\label{eq53}
\hat{\beta}=\frac{1}{2\zeta^{2}}
\quad (\hat{\alpha}=0).
\end{equation}

We have, from \cref{eq52},
\begin{equation} 
\label{eq54}
\zeta=\frac{\hat{\alpha}^{2}\hat{\beta}+1}
{\left\{\hat{\beta}\left(\hat{\alpha}^{2}
\hat{\beta}+2\right)\right\}^{1/2}}.
\end{equation}
Now from (\ref{eq49}) it follows that $d\hat{\xi}/d\zeta=(\zeta^2-\hat{\alpha}^{2})^{1/2}$, and hence
\begin{equation} 
\label{eq55}
\frac{d\hat{\beta}}{d\hat{\xi}}=
\frac{d\hat{\beta}}{d\zeta}\left(
\frac{d\hat{\xi}}{d\zeta}\right)^{-1}
=-\hat{\beta}^{2}\left(\hat{\alpha}^{2}
\hat{\beta}+2\right)^{2}.
\end{equation}
Thus, for the coefficients \cite[Eq. (1.9)]{Dunster:2020:LGE}, let us denote $E_{s}(\xi)=e_{s}(\hat{\alpha},\hat{\beta})$. Then we 
find, using \cref{eq51,eq52,eq54,eq55}, and \cite[Eqs. (1.10) - (1.12)]{Dunster:2020:LGE}, that
\begin{equation} 
\label{eq56}
e_{1}(\hat{\alpha},\hat{\beta})=\tfrac{1}{24}\hat{\beta}
\left(5\hat{\alpha}^{4}\hat{\beta}^{2}
+15\hat{\alpha}^{2}\hat{\beta}+9\right),
\end{equation}
\begin{equation} 
\label{eq57}
e_{2}(\hat{\alpha},\hat{\beta})=\tfrac{1}{16}\hat{\beta}^{2}
\left(\hat{\alpha}^{2}\hat{\beta}+2\right)^{2}
\left(5\hat{\alpha}^{4}\hat{\beta}^{2}
+10\hat{\alpha}^{2}\hat{\beta}+3\right),
\end{equation}
and for $s=2,3,4\ldots$
\begin{multline} 
\label{eq58}
e_{s+1}(\hat{\alpha},\hat{\beta}) =
\frac{1}{2}\hat{\beta}^{2}\left(\hat{\alpha}^{2}
\hat{\beta}+2\right)^{2}
\frac{\partial e_{s}(\hat{\alpha},\hat{\beta})}
{\partial \hat{\beta}}
\\
+\frac{1}{2}\int_{0}^{\hat{\beta}}
p^{2}\left(\hat{\alpha}^{2}p+2\right)^{2}
\sum\limits_{j=1}^{s-1}
\frac{\partial e_{j}(\hat{\alpha},p)}{\partial p}
\frac{\partial e_{s-j}(\hat{\alpha},p)}{\partial p} dp.
\end{multline}

Note that $e_s(\hat{\alpha},0)=0$ for all $s\ge1$. Using an expansion similar to \eqref{even}, the even coefficients $e_{2s}(\hat{\alpha},\hat{\beta})$ ($s=1,2,3,\ldots$), viewed as functions of $\zeta$, are meromorphic in a neighborhood of $\zeta=\pm\hat{\alpha}$ (hence may have poles only at those points). Moreover, by \cref{eq58} and induction on $s$, the odd coefficients $e_{2s+1}(\hat{\alpha},\hat{\beta})$ ($s=0,1,2,\ldots$) have a branch cut on $[-\hat{\alpha},\hat{\alpha}]$ and are analytic on $\mathbb{C}\setminus[-\hat{\alpha},\hat{\alpha}]$.

We can now apply \cite[Thm. 1.1]{Dunster:2020:LGE} to obtain solutions $\hat{Y}=(\zeta^{2}-\alpha^{2})^{-1/4}\hat{W}$ of \cref{eq48a}, where $\hat{W}$ is one of the following four LG expansions
\begin{equation}
\label{eq59}
\hat{W}_{j}(u,\hat{\alpha},\zeta) =
\exp \left\{u\hat{\xi}
+\sum\limits_{s=1}^{n-1}{
\frac{e_{s}(\hat{\alpha},\hat{\beta})}
{u^{s}}}\right\} \left\{1+\hat{\eta}_{n,j}
(u,\hat{\alpha},\zeta) \right\}
\quad (j=1,3),
\end{equation}
\begin{equation} 
\label{eq60}
\hat{W}_{2}(u,\hat{\alpha},\zeta) =
\exp \left\{-u\hat{\xi}
+\sum\limits_{s=1}^{n-1}{(-1)^{s}
\frac{e_{s}(\hat{\alpha},\hat{\beta})}
{u^{s}}}
\right\} \left\{1+\hat{\eta}_{n,2}(u,\hat{\alpha},\zeta) \right\},
\end{equation}
and
\begin{equation} 
\label{eq61}
\hat{W}_{4}^{\pm}(u,\hat{\alpha},\zeta) =
\exp \left\{-u\hat{\xi}
+\sum\limits_{s=1}^{n-1}{(-1)^{s}
\frac{e_{s}(\hat{\alpha},\hat{\beta})}
{u^{s}}}
\right\} 
\left\{1+\hat{\eta}_{n,4}^{\pm}(u,\hat{\alpha},\zeta) \right\}.
\end{equation}
Like the solutions given by \eqref{eq27}, the LG solutions $\hat{W}_4^\pm(u,\hat\alpha,\zeta)$ are proportional but have differing $\zeta$-domains of validity, namely the regions $\Theta_4^\pm$ where $\hat\eta_{n,4}^\pm(u,\hat\alpha,\zeta)=\mathcal{O}(u^{-n})$ as $u\to\infty$.

Note all of these functions are independent of $n$ due to their (unique) behavior at infinity in sectors where they are recessive. For example, since $\hat{\beta} \to 0$ as $|\zeta| \to \infty$, $e_{s}(\hat{\alpha},0)=0$, and from the error bounds provided below $\hat{\eta}_{n,1}(u,i \infty)=0$, we have from \cref{eq59}
\begin{equation}
\label{eq62}
\hat{W}_{1}(u,\hat{\alpha},\zeta) \sim
e^{u\hat{\xi}}
\quad (\zeta \to i \infty, \, \Re(\hat{\xi}) \to -\infty),
\end{equation}
and similarly for the other LG solutions.

For $\zeta \in \Theta_{j}$ ($j=1,2,3$) we again apply \cite[Thm. 1.1]{Dunster:2020:LGE}, and consequently bounds on the error terms are given by
\begin{equation} 
\label{eq63}
\left\vert \hat{\eta}_{n,j}(u,\hat{\alpha},\zeta) \right\vert 
\leq |u|^{-n}
\omega_{n,j}(u,\hat{\alpha},\hat{\beta}) 
\exp \left\{|u|^{-1}\varpi_{n,j}(u,\hat{\alpha},\hat{\beta}) +|u|^{-n}\omega_{n,j}(u,\hat{\alpha},\hat{\beta}) \right\},
\end{equation}
where
\begin{multline} 
\label{eq64}
\omega_{n,j}(u,\hat{\alpha},\hat{\beta}) 
=2\int_{0}^{\hat{\beta}}
{\left\vert
\frac{\partial e_{n}(\hat{\alpha},p)}{\partial p} d p
\right\vert } \\ 
+\sum\limits_{s=1}^{n-1}\frac{1}{|u|^{s}}
 \sum\limits_{k=s}^{n-1}
\int_{0}^{\hat{\beta}}
{\left\vert p^{2}\left(\hat{\alpha}^{2}p+2\right)^{2}
\frac{\partial e_{k}(\hat{\alpha},p)}{\partial p}
\frac{\partial e_{s+n-k-1}(\hat{\alpha},p)}{\partial p} dp
\right\vert },
\end{multline}
and
\begin{equation} 
\label{eq65}
\varpi _{n,j}(u,\hat{\alpha},\hat{\beta}) =4\sum\limits_{s=0}^{n-2}
\frac{1}{|u|^{s}}
\int_{0}^{\hat{\beta}}
\left\vert
\frac{\partial e_{s+1}(\hat{\alpha},p)}{\partial p} dp 
\right\vert .
\end{equation}
These bounds also hold for $\hat{\eta}_{n,4}^{\pm}(u,\hat{\alpha},\zeta)$ for $\zeta \in \Theta_{4}^{\pm}$ .

Integration is taken along paths that correspond to paths in the $\hat{\xi}$ plane as described above in the definitions of the regions of validity. The lower integration limits $\hat{\beta} =0$ are chosen to correspond to $\zeta = \pm i \infty$ for $j=1,3$, respectively, $\zeta = +\infty$ for $j=2$, and $\zeta = -\infty \pm i0$ for the bounds for $\hat{\eta}_{n,4}^{\pm}(u,\hat{\alpha},\zeta)$.

Matching solutions recessive at $\zeta= \pm \infty$ and $\zeta= \pm i \infty$, and using \cref{eq15,eq50}, provides the following:
\begin{equation} 
\label{eq66}
U\left(\tfrac12 u\hat{\alpha}^2,-i\sqrt{2u}\,\zeta\right)
=\left( \frac{2e}{u \hat{\alpha}^2} \right)^{\frac14 u\hat{\alpha}^2}
\frac{e^{\frac{1}{4}(u\hat{\alpha}^2+1)\pi i}}
{\left\{2u\left(\zeta^{2}-\hat{\alpha}^{2}\right)
\right\}^{1/4}}\hat{W}_{1}(u,\hat{\alpha},\zeta),
\end{equation}
\begin{equation} 
\label{eq67}
U\left(\tfrac12 u\hat{\alpha}^2,i\sqrt{2u}\,\zeta\right)
=\left( \frac{2e}{u \hat{\alpha}^2} \right)^{\frac14 u\hat{\alpha}^2}
\frac{e^{-\frac{1}{4}(u\hat{\alpha}^2+1)\pi i}}
{\left\{2u\left(\zeta^{2}-\hat{\alpha}^{2}\right)
\right\}^{1/4}}\hat{W}_{3}(u,\hat{\alpha},\zeta),
\end{equation}
\begin{equation} 
\label{eq68}
U\left(-\tfrac12 u\hat{\alpha}^2,\sqrt{2u}\,\zeta\right)
=\left(\frac{u \hat{\alpha}^2}{2e}\right)
^{\frac14 u\hat{\alpha}^2}
\frac{1}{\left\{2u\left(\zeta^{2}-\hat{\alpha}^{2}\right)
\right\}^{1/4}}\hat{W}_{2}(u,\hat{\alpha},\zeta),
\end{equation}
and
\begin{multline} 
\label{eq69}
U\left(-\tfrac12 u\hat{\alpha}^2,-\sqrt{2u}\,\zeta\right)
=\pm i e^{\mp \frac{1}{2}u\hat{\alpha}^2\pi i}
\left(\frac{u \hat{\alpha}^2}{2e}\right)
^{\frac14 u\hat{\alpha}^2}
\\ \times
\frac{1}{\left\{2u\left(\zeta^{2}-\hat{\alpha}^{2}\right)
\right\}^{1/4}}\hat{W}_{4}^{\pm}(u,\hat{\alpha},\zeta),
\end{multline}
where we recall that the coefficients $e_{s}(\hat{\alpha},\hat{\beta})$ vanish at $|\zeta| = \infty$ ($\hat{\beta}=0$).

Next, for the derivatives, from (\ref{eq48a}) we find that
\begin{equation} 
\label{eq70}
\frac{d^{2}\tilde{Y}}{d\zeta^{2}}=
\left\{u^{2}\left(\zeta^{2}-\hat{\alpha}^{2}\right)
+\frac {2\zeta^{2}+\hat{\alpha}^{2}}
{\left(\zeta^{2}-\hat{\alpha}^{2}\right)^{2}}
\right\}\tilde{Y},
\end{equation}
has solutions given by
\begin{equation} 
\label{eq71}
\tilde{Y}=(\zeta^{2}-\hat{\alpha}^{2})^{-1/2}\partial \hat{Y} /\partial \zeta,
\end{equation}
where $\hat{Y}=U(-\frac12 u\hat{\alpha}^2,\sqrt{2u}\,\zeta)$, or any other solution of (\ref{eq48a}). Now, again with $f=\zeta^{2}-\hat{\alpha}^{2}$, but now with $g=(2\zeta^{2}+\hat{\alpha}^{2})/(\zeta^{2}-\hat{\alpha}^{2})^{2}$ instead of $g=0$, we have in place of $\hat{\Psi}(\hat{\alpha},\zeta)$ as given by (\ref{eq51}) the Schwarzian, denoted by $\tilde{\Psi}(\hat{\alpha},\zeta)$, given by
\begin{equation} 
\label{eq72}
\tilde{\Psi}(\hat{\alpha},\zeta)
=\frac{5\zeta^{2}+2\hat{\alpha}^{2}}
{4\left(\zeta^{2}-\hat{\alpha}^{2}\right)^{3}}.
\end{equation}
Then, similarly to \cref{eq56,eq57,eq58}, we obtain LG coefficients given by
\begin{equation} 
\label{eq73}
\tilde{e}_{1}(\hat{\alpha},\hat{\beta})=-\tfrac{1}{24}\hat{\beta}
\left(7\hat{\alpha}^{4}\hat{\beta}^{2}
+21\hat{\alpha}^{2}\hat{\beta}+15\right),
\end{equation}
and
\begin{equation} 
\label{eq74}
\tilde{e}_{2}(\hat{\alpha},\hat{\beta})=-\tfrac{1}{16}\hat{\beta}^{2}
\left(\hat{\alpha}^{2}\hat{\beta}+2\right)^{2}
\left(7\hat{\alpha}^{4}\hat{\beta}^{2}
+14\hat{\alpha}^{2}\hat{\beta}+5\right),
\end{equation}
with $\hat{\beta}$ also given by (\ref{eq52}), and subsequent coefficients by (\ref{eq58}). Again, $\tilde{e}_{2s}(\hat{\alpha},\hat{\beta})$ are meromorphic at $\zeta = \pm \hat{\alpha}$ when regarded as functions of $\zeta$, and  $\tilde{e}_{2s+1}(\hat{\alpha},\hat{\beta})$ are analytic for $\zeta \in \mathbb{C} \setminus [-\hat{\alpha},\hat{\alpha}]$.

The LG solutions to (\ref{eq70}) are given by $\tilde{Y}_{j}(u,\hat{\alpha},\zeta)=(\zeta^{2}-\hat{\alpha}^{2})^{-1/4}\tilde{W}_{j}(u,\hat{\alpha},\zeta)$ where
\begin{equation}
\label{eq75}
\tilde{W}_{j}(u,\hat{\alpha},\zeta) =
\exp \left\{u\hat{\xi}
+\sum\limits_{s=1}^{n-1}{\frac{\tilde{e}_{s}(\hat{\alpha},\hat{\beta})}
{u^{s}}}\right\} \left\{1+\tilde{\eta}_{n,j}
(u,\hat{\alpha},\zeta) \right\}
\quad (j=1,3),
\end{equation}
\begin{equation} 
\label{eq76}
\tilde{W}_{2}(u,\hat{\alpha},\zeta) =
\exp \left\{-u\hat{\xi}
+\sum\limits_{s=1}^{n-1}{(-1)^{s}
\frac{\tilde{e}_{s}(\hat{\alpha},\hat{\beta})}{u^{s}}}
\right\} \left\{1+\tilde{\eta}_{n,2}
(u,\hat{\alpha},\zeta) \right\},
\end{equation}
and
\begin{equation} 
\label{eq77}
\tilde{W}_{4}^{\pm}(u,\hat{\alpha},\zeta) =
\exp \left\{-u\hat{\xi}
+\sum\limits_{s=1}^{n-1}{(-1)^{s}
\frac{\tilde{e}_{s}(\hat{\alpha},\hat{\beta})}{u^{s}}}
\right\} \left\{1+\tilde{\eta}_{n,4}^{\pm}
(u,\hat{\alpha},\zeta) \right\},
\end{equation}
again all being independent of $n$. Error bounds and regions of validity are the same as those for the corresponding non-derivative functions, except of course with the different coefficients.

Now, using \cref{eq16,eq50}, we can match solutions that are recessive at $\zeta= \pm \infty$ and $\zeta= \pm i \infty$, and as a result we arrive at our desired expansions:
\begin{equation} 
\label{eq78}
U'\left(\tfrac12 u\hat{\alpha}^2,-i\sqrt{2u}\,\zeta\right)
=-e^{\frac{1}{4}(u\hat{\alpha}^2-1)\pi i}
\left(\frac{2e}{u \hat{\alpha}^2}\right)^{\frac14 u\hat{\alpha}^2}
\left\{\frac18 u\left(\zeta^{2}-\hat{\alpha}^{2}\right)
\right\}^{1/4}\tilde{W}_{1}(u,\hat{\alpha},\zeta),
\end{equation}
\begin{equation} 
\label{eq79}
U'\left(\tfrac12 u\hat{\alpha}^2,i\sqrt{2u}\,\zeta\right)
=-e^{-\frac{1}{4}(u\hat{\alpha}^2-1)\pi i}
\left(\frac{2e}{u \hat{\alpha}^2}\right)^{\frac14 u\hat{\alpha}^2}
\left\{\frac18 u\left(\zeta^{2}-\hat{\alpha}^{2}\right)
\right\}^{1/4}\tilde{W}_{3}(u,\hat{\alpha},\zeta),
\end{equation}
\begin{equation} 
\label{eq80}
U'\left(-\tfrac12 u\hat{\alpha}^2,\sqrt{2u}\,\zeta\right)
=-\left(\frac{u \hat{\alpha}^2}{2e}\right)
^{\frac14 u\hat{\alpha}^2}
\left\{\frac18 u\left(\zeta^{2}-\hat{\alpha}^{2}\right)
\right\}^{1/4}\tilde{W}_{2}(u,\hat{\alpha},\zeta),
\end{equation}
and
\begin{multline} 
\label{eq81}
U'\left(-\tfrac12 u\hat{\alpha}^2,-\sqrt{2u}\,\zeta\right)
=\pm i e^{\mp \frac{1}{2}u\hat{\alpha}^2\pi i}
\left(\frac{u \hat{\alpha}^2}{2e}\right)
^{\frac14 u\hat{\alpha}^2}
\\ \times
\left\{\frac18 u\left(\zeta^{2}-\hat{\alpha}^{2}\right)
\right\}^{1/4}\tilde{W}_{4}^{\pm}(u,\hat{\alpha},\zeta).
\end{multline}

\section{Expansions valid at the turning points}
\label{sec:PCFExpansions}

Here we use the results of the previous two sections to construct asymptotic expansions for solutions of (\ref{eq01}) that are valid at the two turning points. To this end, let $\zeta$ now depend on $a$ and $z$, as defined by \cref{eq02,eq08}. Then we first define two slowly varying functions, which we denote by $\mathcal{A}(u,a,z)$ and $\mathcal{B}(u,a,z)$, by the pair of equations
\begin{multline}
\label{eq82}
w_{1}(u,a,z)
=e^{-\frac{1}{4}(u\hat{\alpha}^2-1)\pi i}\Biggl\{
U\left(\tfrac12 u\hat{\alpha}^2,-i\sqrt{2u}\,\zeta\right) 
\mathcal{A}(u,a,z)  \Biggr.
\\ \left.
+\frac{\partial U\left(\tfrac12 u\hat{\alpha}^2,-i\sqrt{2u}
\,\zeta\right)}{\partial\zeta}\mathcal{B}(u,a,z)\right\},
\end{multline}
and
\begin{multline}
\label{eq83}
w_{3}(u,a,z)
=e^{\frac{1}{4}(u\hat{\alpha}^2-1)\pi i}\Biggl\{
U\left(\tfrac12 u\hat{\alpha}^2,i\sqrt{2u}\,\zeta\right) 
\mathcal{A}(u,a,z) \Biggr.
\\ \left.
+\frac{\partial U\left(\tfrac12 u\hat{\alpha}^2,i\sqrt{2u}
\,\zeta\right)}{\partial\zeta}\mathcal{B}(u,a,z)\right\},
\end{multline}
where $w_{1}(u,a,z)$ and $w_{3}(u,a,z)$ are the solutions defined by \cref{eq18,eq19}, which are recessive at the singularities $z=x_{1}$ and $z=x_{3}$, respectively. Here, $\hat{\alpha}$ will be specified shortly. We note that $U(\tfrac12 u\hat{\alpha}^2,-i\sqrt{2u}\,\zeta)$ is recessive at $\zeta = i \infty$, which corresponds to $z=x_{1}$, and thus the RHS of (\ref{eq82}) matches the recessive behavior of the function on the LHS of the equation. Likewise, both sides of (\ref{eq83}) have a matching recessive behavior at $z=x_{3}$ ($\zeta = -i \infty$).

The multiplicative constants on the RHS of \cref{eq82,eq83} were chosen to produce, on referring to \cref{eq45,eq29},
\begin{multline}
\label{eq84}
w_{2}(u,a,z)
=\frac{\sqrt{2\pi}}
{\Gamma\left(\tfrac12 u\hat{\alpha}^2+\tfrac{1}{2}\right)}
\Biggl\{
U\left(-\tfrac12 u\hat{\alpha}^2,\sqrt{2u}\,\zeta\right) 
\mathcal{A}(u,a,z) \Biggr.
\\ \left.
+\frac{\partial U\left(-\tfrac12 u\hat{\alpha}^2,\sqrt{2u}
\,\zeta\right)}{\partial\zeta}\mathcal{B}(u,a,z)\right\}.
\end{multline}
Again we see that both sides have the same recessive behavior at a singularity, in this case at $z=x_{2}$ ($\zeta = +\infty$).

We need a similar representation for $w_{4}(u,a,z)$ that involves $U(-\tfrac12 u\hat{\alpha}^2,-\sqrt{2u}\,\zeta)$ and its derivative, both of which are recessive at $z=x_{4}$ ($\zeta=-\infty$). This is achieved by defining $\hat{\alpha}$ by
\begin{equation}
\label{eq85}
\tfrac12 u\hat{\alpha}^2=\lambda+\tfrac12,
\end{equation}
where $\lambda$ is the connection coefficient that is defined via (\ref{eq30}). Then from \cref{eq30,eq45,eq82,eq83,eq85} we arrive at our desired representation
\begin{multline}
\label{eq86}
w_{4}(u,a,z)
=\frac{\sqrt{2\pi}}
{\Gamma\left(\tfrac12 u\hat{\alpha}^2+\tfrac{1}{2}\right)}
\Biggl\{
U\left(-\tfrac12 u\hat{\alpha}^2,-\sqrt{2u}\,\zeta\right) 
\mathcal{A}(u,a,z) \Biggr.
\\ \left.
+\frac{\partial U\left(-\tfrac12 u\hat{\alpha}^2,
-\sqrt{2u}\,\zeta\right)}
{\partial\zeta}\mathcal{B}(u,a,z)\right\}.
\end{multline}

Solving \cref{eq82,eq84}, and using the Wronskian (\ref{eq43}), yields the explicit formulas
\begin{multline}
\label{eq87}
\mathcal{A}(u,a,z)=\frac{e^{-\frac{1}{4}(u\hat{\alpha}^{2}+1)\pi i}
\Gamma\left(\tfrac{1}{2}u\hat{\alpha}^{2}+\tfrac{1}{2}\right)}
{2\sqrt{u \pi}}
w_{2}(u,a,z)\frac{\partial 
U\left(\tfrac12 u\hat{\alpha}^2,-i\sqrt{2u}
\,\zeta\right)}{\partial\zeta}
\\
+\frac{i}{\sqrt{2u}}
w_{1}(u,a,z)
\frac{\partial
U\left(-\tfrac12 u\hat{\alpha}^2,\sqrt{2u}\,\zeta\right)}{\partial\zeta},
\end{multline}
and
\begin{multline}
\label{eq88}
\mathcal{B}(u,a,z)=-\frac{e^{-\frac{1}{4}
(u\hat{\alpha}^{2}+1)\pi i}
\Gamma\left(\tfrac{1}{2}u\hat{\alpha}^{2}+\tfrac{1}{2}\right)}
{2\sqrt{u \pi}}
w_{2}(u,a,z) 
U\left(\tfrac12 u\hat{\alpha}^2,-i\sqrt{2u}\,\zeta\right)
\\
-\frac{i}{\sqrt{2u}}
w_{1}(u,a,z)
U\left(-\tfrac12 u\hat{\alpha}^2,\sqrt{2u}\,\zeta\right).
\end{multline}
Similar representations can be obtained by choosing any pair of \cref{eq82,eq83,eq84,eq86}.

The plan is to replace the six functions $w_{j}(u,a,z)$ ($j=1,2$), $U(-\tfrac12 u\hat{\alpha}^2,\sqrt{2u}\,\zeta)$, $\partial U(-\tfrac12 u\hat{\alpha}^2,\sqrt{2u}\,\zeta)/ \partial \zeta$, $U(\tfrac12 u\hat{\alpha}^2,-i \sqrt{2u}\,\zeta)$ and $\partial U(\tfrac12 u\hat{\alpha}^2,-i \sqrt{2u}\,\zeta)/\partial \zeta$ in \cref{eq87,eq88} by their LG expansions, and likewise in other sectors containing numerically satisfactory sets of functions. This provides a method of asymptotically evaluating the two coefficient functions in certain regions described next, since each product pair in the expression involves one recessive and one dominant solution in the region under consideration.

Let us then examine the regions of asymptotic validity in this method. To this end, define
\begin{equation} 
\label{eq89}
D=\left\{Z_{1}\cap Z_{2}\right\}\cup\left\{Z_{1}\cap Z_{4}^{+}\right\}
\cup \left\{Z_{3}\cap Z_{2}\right\}\cup\left\{Z_{3}\cap Z_{4}^{-}\right\}.
\end{equation}
We assume here that $\{x_{1},x_{2}\} \in Z_{1}\cap Z_{2}$, which means that every point in this intersection lies on a progressive path linking $x_{1}$ to $x_{2}$. Similarly for the other three intersection sets.

Now $D$ surrounds, but does not contain, the interval $[z_{1},z_{2}]$. The significance of $D$ is that any numerically satisfactory pair of LG expansions is asymptotically valid in this domain. We also need to utilize LG expansions for parabolic cylinder functions, and as such we also need to exclude the points where they are not all valid, namely those corresponding to the $\zeta$ interval $[-\hat{\alpha},\hat{\alpha}]$. Now, we know that $\zeta(z_{1})=-\alpha$ and $\zeta(z_{2})=\alpha$. This leads to the following. 

\begin{definition}
\label{defn:I}
Let $z=\hat{z}_{1}$ and $z=\hat{z}_{2}$ correspond to $\zeta= \pm \hat{\alpha}$, so that $\zeta(\hat{z}_{1})=-\hat{\alpha}$ and $\zeta(\hat{z}_{2})=\hat{\alpha}$. Let $J$ be the curve in the $z$ plane with end points $\hat{z}_{1}$ and $\hat{z}_{2}$, corresponding to the $\zeta$ interval $[-\hat{\alpha},\hat{\alpha}]$. Then for any $\delta>0$ we define $I(\delta)=\{z: |z-v|<\delta,\forall v \in [z_{1},z_{2}] \cup J\}$; in other words, $I(\delta)$ consists of all points within a distance $\delta$ of $[z_{1},z_{2}] \cup J$.
\end{definition}

Note that $J$ would be the interval $[\hat{z}_{1},\hat{z}_{2}]$ if $\hat{\alpha}$ were real. We also observe that, for large $u$, the point sets $[z_{1},z_{2}]$ and $J$ are close to each other, since from \cref{eq36,eq85} $\hat{\alpha}=\alpha+\mathcal{O}(u^{-2})$ as $u \to \infty$.

For $z \in D \setminus I(\delta)$ we can then use both the LG expansions of the general equation given in \cref{sec:LG}, as well as those for parabolic cylinder functions given in \cref{sec:PCF}. Thus, we can use the appropriate LG expansions from \cref{eq02,eq18,eq19,eq20,eq27,eq28,eq49,eq59,eq60,eq61,eq66,eq67,eq68,eq69,eq75,eq76,eq77,eq78,eq79,eq80,eq81} in \cref{eq87,eq88}, and as a result we find that the same expansions are obtained regardless of which pair is chosen, namely
\begin{multline}
\label{eq90}
\mathcal{A}(u,a,z) = \frac{(2u)^{1/4}}{2}\left(\frac{\zeta^{2}-\hat{\alpha}^{2}}
{f(a,z)}\right)^{1/4}
\\ \times
\left[K_{n,2}(u,a)\exp\left\{u(\hat{\xi}-\xi)
+\sum_{s=1}^{n-1}
(-1)^{s}\frac{\tilde{\mathcal{E}}_{s}(a,z)}{u^{s}}\right\}
\right.
\\
\left.
-iK_{n,1}(u,a)
\exp\left\{-u(\hat{\xi}-\xi)
+\sum_{s=1}^{n-1}\frac{\tilde{\mathcal{E}}_{s}(a,z)}{u^{s}}\right\}
\right]\left\{1+ \mathcal{O}
\left(\frac{1}{u^{n}}\right)\right\},
\end{multline}
and
\begin{multline}
\label{eq91}
\mathcal{B}(u,a,z) = 
-\frac{1}{(2u)^{3/4}}\frac{1}
{\left\{f(a,z)\left(\zeta^{2}-\hat{\alpha}^{2}\right)
\right\}^{1/4}}
\\ \times
\left[K_{n,2}(u,a)
\exp\left\{u(\hat{\xi}-\xi)
+\sum_{s=1}^{n-1}(-1)^{s}
\frac{\mathcal{E}_{s}(a,z)}{u^{s}}\right\}
\right.
\\
\left.
+iK_{n,1}(u,a)
\exp\left\{-u(\hat{\xi}-\xi)
+\sum_{s=1}^{n-1}\frac{\mathcal{E}_{s}(a,z)}{u^{s}}\right\}
\right]\left\{1+ \mathcal{O}
\left(\frac{1}{u^{n}}\right)\right\},
\end{multline}
as $u \to \infty$, uniformly for $z \in D\setminus I(\delta)$, where
\begin{equation} 
\label{eq92}
K_{n,1}(u,a)
=\left(\frac{u\hat{\alpha}^{2}}{2e}\right)
^{\frac14u\hat{\alpha}^{2}}
k_{n,1}(u,a),
\end{equation}
\begin{equation} 
\label{eq93}
K_{n,2}(u,a)
=\frac{1}{\sqrt{2\pi}}
\left(\frac{2e}{u\hat{\alpha}^{2}}\right)
^{\frac14u\hat{\alpha}^{2}}
\Gamma\left(\tfrac{1}{2}u\hat{\alpha}^{2}
+\tfrac{1}{2}\right) k_{n,2}(u,a),
\end{equation}
\begin{equation} 
\label{eq94}
\tilde{\mathcal{E}}_{s}(a,z)
=\hat{E}_{s}(a,z)
+(-1)^{s}\tilde{e}_{s}(\hat{\alpha},\hat{\beta}),
\end{equation}
and
\begin{equation} 
\label{eq95}
\mathcal{E}_{s}(a,z)
=\hat{E}_{s}(a,z)
+(-1)^{s}e_{s}(\hat{\alpha},\hat{\beta}).
\end{equation}

Now, as we observed above, $\hat{\alpha}^{2}=\alpha^{2}+\mathcal{O}(u^{-2})$ as $u \to \infty$. Thus, from \cref{eq49,eq08}
\begin{multline} 
\label{eq96}
\hat{\xi}-\xi
=\frac{1}{2}\zeta\left\{
\left(\zeta^{2}-\hat{\alpha}^{2}\right)^{1/2}
-\left(\zeta^{2}-\alpha^{2}\right)^{1/2}\right\}
-\frac{1}{2}\hat{\alpha}^{2}
\ln\left\{\zeta+\left(\zeta^{2}
-\hat{\alpha}^{2}\right)^{1/2}\right\}
\\
+\frac{1}{2}\alpha^{2}\ln\left\{\zeta+\left(\zeta^{2}-\alpha^{2}\right)^{1/2}\right\}
+\frac{1}{2}\hat{\alpha}^{2}\ln(\hat{\alpha})
-\frac{1}{2}\alpha^{2}\ln(\alpha)
=\mathcal{O}\left(\frac{1}{u^{2}}\right),
\end{multline}
for $\alpha \in [0,\alpha_{1}]$ and bounded $\zeta$. It can also be confirmed that $\hat{\xi}-\xi=u^{-2}\mathcal{O}\{\ln(\zeta)\}+\mathcal{O}(u^{-1})$ as $\zeta \to \infty$. Therefore, the terms $\pm u(\hat{\xi}-\xi)$ appearing in the exponents in \cref{eq90,eq91} are bounded for fixed $\zeta$ as $u \to \infty$, and contribute at worst (on exponentiation) to a slowly growing algebraic term as $\zeta \to \infty$ with $u$ large. This confirms that $\mathcal{A}(u,a,z)$ and $\mathcal{B}(u,a,z)$ are slowly-varying as $u \to \infty$ and $z \in D \setminus I(\delta)$.

The above expansions break down near the turning points, and indeed are not valid in $I(\delta)$. Following \cite{Dunster:2017:COA} we can instead use Cauchy's integral formula to obtain expansions that are valid at these points. Thus, because the functions are analytic at these turning points, we have
\begin{equation} 
\label{eq97}
\mathcal{A}(u,a,z)
=\frac{1}{2 \pi i}\oint_{\mathcal{C}}
\frac{\mathcal{A}(u,a,t)}{t-z}dt,
\,
\mathcal{B}(u,a,z)
=\frac{1}{2 \pi i}\oint_{\mathcal{C}}
\frac{\mathcal{B}(u,a,t)}{t-z}dt.
\end{equation}
The contour $\mathcal{C}$ is a suitably chosen positively orientated simple loop lying in $D$ that encloses $z$ and the set $I(\delta)$. In the integrands we can then insert the expansions \cref{eq90,eq91}, since they are valid on the contour. This enables an accurate evaluation of $\mathcal{A}(u,a,z)$ and $\mathcal{B}(u,a,z)$ for points inside the contour, particularly those in $I(\delta)$.

Note that \cref{eq82,eq83,eq84,eq86}, being exact, can be differentiated with respect to $z$ to produce expansions for the derivatives of the solutions. In doing so, one uses the chain rule in conjunction with \cref{eq05}, as well as \cref{eq48a}. For example, from \cref{eq82}
\begin{multline}
\label{eq98}
\frac{\partial w_{1}(u,a,z)}{\partial z}
=e^{-\frac{1}{4}(u\hat{\alpha}^2-1)\pi i}\Biggl\{
U\left(\tfrac12 u\hat{\alpha}^2,-i\sqrt{2u}\,\zeta\right) 
\tilde{\mathcal{A}}(u,a,z)  \Biggr.
\\ \left.
+\frac{\partial U\left(\tfrac12 u\hat{\alpha}^2,-i\sqrt{2u}
\,\zeta\right)}{\partial\zeta}
\tilde{\mathcal{B}}(u,a,z)\right\},
\end{multline}
where
\begin{equation} 
\label{eq99}
\tilde{\mathcal{A}}(u,a,z)=\frac{\partial 
\mathcal{A}(u,a,z)}{\partial z}
+u^{2}\left\{\left(\zeta^{2}-\alpha^{2}\right)f(u,a,z)\right\}^{1/2}
\mathcal{B}(u,a,z),
\end{equation}
and
\begin{equation} 
\label{eq100}
\tilde{\mathcal{B}}(u,a,z)=\frac{\partial \mathcal{B}(u,a,z)}{\partial z}
+\left\{\frac{f(u,a,z)}{\zeta^{2}-\alpha^{2}}\right\}^{1/2}
\mathcal{A}(u,a,z).
\end{equation}
Similarly for the other solutions. Then, for $z$ on or close to the set $I(\delta)$ we can use, in conjunction with (\ref{eq97}),
\begin{equation} 
\label{eq101}
\frac{\partial \mathcal{A}(u,a,z)}{\partial z}
=\frac{1}{2 \pi i}\oint_{\mathcal{C}}
\frac{\mathcal{A}(u,a,t)}{(t-z)^{2}}dt,
\,
\frac{\partial \mathcal{B}(u,a,z)}{\partial z}
=\frac{1}{2 \pi i}\oint_{\mathcal{C}}
\frac{\mathcal{B}(u,a,t)}{(t-z)^{2}}dt.
\end{equation}

We shall use the following theorem, the proof of which is a straightforward modification of the one given for a closely related theorem \cite[Thm. 3.1]{Dunster:2021:NKF} that holds for isolated singularities.

\begin{theorem} 
\label{thm:nopoles}
Let $0<\rho_{0}<\rho_{1}$, $u>0$, $z_{0}\in \mathbb{C}$, $H(u,z)$ be an analytic function of $z$ in the open disk $D_{1}=\{z:\, |z-z_{0}|<\rho_{1}\}$, and $h_{s}(z)$ ($s=0,1,2,\ldots$) be a sequence of functions that are analytic in the annulus $R=\{z:\, \rho_{0}<|z-z_{0}|<\rho_{1}\}$. If $H(u,z)$ is known to possess the asymptotic expansion
\begin{equation} 
\label{eq102}
H(u,z) \sim \sum\limits_{s=0}^{\infty}\frac{h_{s}(z)}{u^{s}}
\quad (u \rightarrow \infty),
\end{equation}
in $R$, then each $h_{s}(z)$ is analytic in $D_{1}$ (except possibly having removable singularities there), and the expansion (\ref{eq102}) actually holds for all $z \in D_{1}$.
\end{theorem}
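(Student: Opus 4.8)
The plan is to exploit the gap between the asymptotic information we have (valid only on the annulus $R$) and the analyticity we want (on the full disk $D_1$), bridging them with Cauchy's integral formula exactly as in the applications \cref{eq97,eq101}. The key structural fact is that $H(u,z)$ is \emph{given} to be analytic on all of $D_1$, while the coefficients $h_s(z)$ are only assumed analytic on the annulus; the theorem promises to propagate analyticity inward and to extend the validity of \cref{eq102} across the excluded inner disk.

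First I would fix a circle $\mathcal{C}=\{|t-z_0|=r\}$ with $\rho_0<r<\rho_1$, lying inside $R$, and represent $H(u,z)$ for any $z$ in the inner disk $\{|z-z_0|<r\}$ by the Cauchy integral
\begin{equation*}
H(u,z)=\frac{1}{2\pi i}\oint_{\mathcal{C}}\frac{H(u,t)}{t-z}\,dt,
\end{equation*}
which is legitimate precisely because $H(u,\cdot)$ is analytic on the whole disk $D_1\supset\mathcal{C}$. On $\mathcal{C}$ the hypothesis \cref{eq102} holds, so I would substitute the expansion under the integral sign. The main technical point is to justify term-by-term integration together with control of the remainder uniformly in $t\in\mathcal{C}$: writing $H(u,t)=\sum_{s=0}^{N-1}h_s(t)u^{-s}+O(u^{-N})$ with the $O$-term bounded uniformly on the compact set $\mathcal{C}$, I would integrate each piece against $(t-z)^{-1}$ and estimate the remainder by $\sup_{t\in\mathcal{C}}|H(u,t)-\sum_{s<N}h_s(t)u^{-s}|$ times the length of $\mathcal{C}$ divided by $\mathrm{dist}(z,\mathcal{C})$.

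This produces the candidate coefficient functions
\begin{equation*}
\tilde h_s(z)=\frac{1}{2\pi i}\oint_{\mathcal{C}}\frac{h_s(t)}{t-z}\,dt,
\end{equation*}
each of which is manifestly analytic for $z$ in the inner disk $\{|z-z_0|<r\}$, since differentiation under the integral sign is permitted whenever $z$ is bounded away from $\mathcal{C}$. It remains to check two things: that $\tilde h_s$ agrees with the original $h_s$ on the overlap region $\{r<|z-z_0|<\rho_1\}$ where both are defined, so that $\tilde h_s$ is genuinely the analytic continuation of $h_s$ into the hole; and that the resulting expansion $\sum_s \tilde h_s(z)u^{-s}$ is asymptotic to $H(u,z)$ throughout the inner disk, including at and across the formerly excluded annulus boundary. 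The agreement on the overlap follows because for such $z$ one may also write $h_s(z)$ itself by a Cauchy integral over a contour in $R$, and a standard contour-deformation (Laurent) argument identifies the two; alternatively, since $H(u,z)$ and $\sum_s\tilde h_s(z)u^{-s}$ both represent the same analytic function and possess the same asymptotic expansion on $R$, uniqueness of asymptotic coefficients forces $\tilde h_s\equiv h_s$ on $R$.

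The step I expect to be the genuine obstacle is establishing that the coefficients $h_s$ have no essential singularity or pole at $z_0$ once extended, i.e.\ that the inner Cauchy integral really removes any apparent singularity rather than merely defining a new function. The honest content is that $\tilde h_s$ could \emph{a priori} differ from the true continuation of $h_s$ by the integral over the \emph{inner} boundary of $R$; the assumption that $H(u,z)$ itself is analytic on the full disk is what kills that inner contribution in the limit, forcing the principal (negative-index Laurent) part of each $h_s$ to vanish. I would make this precise by expanding $h_s$ in its Laurent series on $R$, inserting into \cref{eq102}, and comparing with the Taylor expansion of the genuinely analytic $H(u,z)$ at $z_0$ order by order in $u^{-1}$; matching coefficients shows every negative-power Laurent coefficient of every $h_s$ must be zero, which is exactly the statement that each $h_s$ has at worst a removable singularity at $z_0$ and hence extends analytically to $D_1$. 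Since this is the already-established argument of \cite[Thm.~3.1]{Dunster:2021:NKF} with the isolated singularity $z_0$ playing the role of the earlier pole, the modification needed here is only to carry the single-point argument through verbatim, which is why the full details can be omitted.
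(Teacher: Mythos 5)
Your proof is correct and is essentially the argument the paper relies on: the paper offers no proof of \cref{thm:nopoles} itself, deferring to \cite[Thm.~3.1]{Dunster:2021:NKF}, and the ``straightforward modification'' it alludes to is precisely your scheme of representing $H(u,z)$ by Cauchy's formula over a circle $\mathcal{C}\subset R$, integrating the expansion term by term with the remainder bounded uniformly on the compact circle, and identifying the resulting analytic candidates $\tilde{h}_s$ with $h_s$ on the overlap annulus $\{\rho_0<|z-z_0|<r\}$ by uniqueness of asymptotic power-series coefficients, which simultaneously continues each $h_s$ analytically across the hole and extends the validity of \cref{eq102} to all of $D_1$. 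One remark: that uniqueness step already forces every negative-index Laurent coefficient of every $h_s$ to vanish, so the separate matching argument in your final paragraph is redundant rather than ``the genuine obstacle'' (and as worded it is slightly off, since \cref{eq102} cannot be ``compared at $z_0$'' where it is not yet known to hold --- the correct version compares against the vanishing negative-index Laurent coefficients of $H(u,\cdot)$ itself); the only hypothesis worth making explicit is the one you tacitly use, namely that \cref{eq102} holds uniformly on compact subsets of $R$.
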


The following provides traditional asymptotic expansions under certain conditions, and we shall illustrate this in the next section.

\begin{theorem}
\label{thm:tp-series}
For arbitrary positive integer $n$, and $a_{0} \leq a \leq a_{1}$, suppose that
\begin{equation}
\label{eq103}
\frac{K_{n,1}(u,a)}{K_{n,2}(u,a)}
= l_{0}(a)\exp\left\{ \sum_{s=1}^{n-1}
\frac{l_{s}(a)}{u^{s}} \right\}
\left\{1+ \mathcal{O}
\left(\frac{1}{u^{n}}\right)\right\}
\quad (u \to \infty),
\end{equation}
for certain constants $l_{2s+1}(a)$ with $l_{0}(a) \neq 0$, where $K_{n,1}(u,a)$ and $K_{n,2}(u,a)$ are defined by \cref{eq92,eq93}. Also, assume for some fixed $z_{0} \in \mathbb{C}$, positive $\rho_{0}$, and sufficiently small $\delta>0$, that the disk $D_{0}=\{z:\, |z-z_{0}|<\rho_{0}\}$ contains $I(\delta)$ and lies inside $D \cup I(\delta)$. Then as $u \to \infty$, $\mathcal{A}(u,a,z)$ and $\mathcal{B}(u,a,z)$ possess asymptotic expansions of the forms
\begin{equation}
\label{eq104}
\mathcal{A}(u,a,z) \sim 
\Lambda(u,a)\left(\frac{\zeta^{2}-\hat{\alpha}^{2}}
{f(a,z)}\right)^{1/4}
\left\{
1 + \sum_{s=1}^{\infty}
\frac{\mathrm{A}_{s}(a,z)}{u^{s}}
\right\},
\end{equation}
and
\begin{equation}
\label{eq105}
\mathcal{B}(u,a,z) \sim 
\frac{\Lambda(u,a)}{u^{2}}
\left(\frac{\zeta^{2}-\hat{\alpha}^{2}}
{f(a,z)}\right)^{1/4}
\sum_{s=0}^{\infty}
\frac{\mathrm{B}_{s}(a,z)}{u^{s}},
\end{equation}
uniformly for  $z \in D \cup I(\delta)$ and $a_{0} \leq a \leq a_{1}$; here $\Lambda(u,a)$ is a constant that can be explicitly determined from the connection coefficients. In addition, $\mathrm{A}_{s+1}(a,z)$ and $\mathrm{B}_{s}(a,z)$ ($s=0,1,2,\ldots$) are analytic in $D \cup I(\delta)$.
\end{theorem}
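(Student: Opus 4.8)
The plan is to start from the explicit expansions \cref{eq90,eq91} for $\mathcal{A}$ and $\mathcal{B}$ that are valid on $D\setminus I(\delta)$, substitute the hypothesis \cref{eq103} on the ratio $K_{n,1}/K_{n,2}$, and show that the resulting expressions simplify into a single power series in $u^{-1}$ with analytic coefficients on the annular region around $[z_1,z_2]\cup J$. First I would factor $K_{n,2}(u,a)$ out of the bracketed terms in both \cref{eq90,eq91}, so that the bracket in each becomes a difference (respectively sum) of two exponentials: one carrying the factor $\exp\{u(\hat\xi-\xi)+\sum(-1)^s\tilde{\mathcal{E}}_s/u^s\}$ and the other carrying $-i\,(K_{n,1}/K_{n,2})\exp\{-u(\hat\xi-\xi)+\sum\tilde{\mathcal{E}}_s/u^s\}$. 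The crux is that, by the estimate \cref{eq96}, $u(\hat\xi-\xi)=\mathcal{O}(u^{-1})$ for bounded $\zeta$, so the factors $\exp\{\pm u(\hat\xi-\xi)\}$ are themselves expandable in inverse powers of $u$. Inserting \cref{eq103} for the ratio, the two exponentials combine; I expect the leading behaviour to produce the prefactor $\Lambda(u,a)$ (built from $K_{n,2}$, the constant $l_0(a)$, and the universal factor $i$), and the remaining $u$-dependence to organise into the asserted series $1+\sum_s \mathrm{A}_s/u^s$ for $\mathcal{A}$ and $\sum_s\mathrm{B}_s/u^s$ for $\mathcal{B}$. The extra $u^{-2}$ in front of the $\mathcal{B}$ series comes from the $(2u)^{-3/4}$ versus $(2u)^{1/4}$ prefactors in \cref{eq90,eq91} together with the fact that the two exponential terms in $\mathcal{B}$ add (rather than cancel at leading order), so the leading contribution to $\mathcal{B}$ is smaller by the expected power.

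The key structural point to verify is that the coefficients $\mathrm{A}_s(a,z)$ and $\mathrm{B}_s(a,z)$ so produced are genuinely \emph{single-valued and analytic} on the punctured neighbourhood $R$ of $[z_1,z_2]\cup J$, and do not inherit the branch cuts of the individual odd coefficients. Here I would invoke the parity structure already established in the excerpt: the even coefficients $\hat{E}_{2s}(a,z)$ and $e_{2s}(\hat\alpha,\hat\beta)$, $\tilde e_{2s}(\hat\alpha,\hat\beta)$ are meromorphic (single-valued) near the turning points, while the odd coefficients carry the branch cut on $[-\hat\alpha,\hat\alpha]$. In forming the combinations $\tilde{\mathcal{E}}_s=\hat E_s+(-1)^s\tilde e_s$ and $\mathcal{E}_s=\hat E_s+(-1)^s e_s$ of \cref{eq94,eq95}, and then taking the symmetric/antisymmetric combinations of the two exponentials, the odd (cut-bearing) pieces should appear only through $u(\hat\xi-\xi)$ plus multiplicatively symmetric combinations whose discontinuities cancel, exactly as in the proofs of \Cref{thm:lambda} and \Cref{lem:ratio}. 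This cancellation of the branch discontinuity is what I expect to be the main obstacle, and I would handle it by checking the jump of each assembled coefficient across $[z_1,z_2]\cup J$ and showing it vanishes, using $\hat\alpha=\alpha+\mathcal{O}(u^{-2})$ and the meromorphy of the even coefficients.

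Once analyticity and single-valuedness of $\mathrm{A}_s,\mathrm{B}_s$ on the annulus $R$ is established and the expansions \cref{eq104,eq105} are shown to hold on $D\setminus I(\delta)$ (hence on such an annulus surrounding $I(\delta)$), the final step is to promote validity across $I(\delta)$ itself, i.e.\ to the full disk $D_0$ containing the turning points. For this I would apply \Cref{thm:nopoles} with $H(u,z)$ taken to be $\mathcal{A}(u,a,z)$ divided by its prefactor (and similarly for $\mathcal{B}$), which is analytic in the full disk $D_1\supset D_0$ because $\mathcal{A},\mathcal{B}$ are defined through \cref{eq87,eq88} in terms of functions analytic at the turning points. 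Since the asymptotic expansion with coefficients $h_s=\mathrm{A}_s$ (resp. $\mathrm{B}_s$) is already known to hold in the annulus $R$, \Cref{thm:nopoles} upgrades each $\mathrm{A}_s,\mathrm{B}_s$ to an analytic function on $D_1$ (with at worst removable singularities) and guarantees that the expansions \cref{eq104,eq105} extend to all of $D_0\supset I(\delta)$. This is precisely the role played by the Cauchy-integral representations \cref{eq97}, which make rigorous the passage from the exterior region to the interior containing the turning points. Combining the three steps yields \cref{eq104,eq105} uniformly for $z\in D\cup I(\delta)$ and $a_0\le a\le a_1$, with the $\mathrm{A}_{s+1},\mathrm{B}_s$ analytic on $D\cup I(\delta)$ as claimed.
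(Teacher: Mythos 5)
Your overall architecture matches the paper's: (i) factor $K_{n,2}(u,a)$ out of \cref{eq90,eq91}, insert the hypothesis \cref{eq103}, use \cref{eq96} to expand $e^{\pm u(\hat{\xi}-\xi)}$ in inverse powers of $u$, and re-expand into formal series of the forms \cref{eq104,eq105} on $D\setminus I(\delta)$; (iii) invoke \Cref{thm:nopoles} on an annulus/disk pair to carry the expansions and the analyticity of the coefficients across $I(\delta)$. Steps (i) and (iii) are essentially what the paper does. One correction in (i): for $\mathcal{B}$ the two exponentials \emph{cancel} at leading order (a $\sinh$-type combination, cf.~\cref{eq190}) rather than ``add''; it is precisely this cancellation, together with the $u^{-1}$ coming from the prefactors $(2u)^{1/4}$ versus $(2u)^{-3/4}$, that produces the factor $u^{-2}$ in \cref{eq105}. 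As written, your sign logic would give $\mathcal{B}$ only one extra power of $u^{-1}$, not two.

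The genuine gap is in your middle step, the analyticity and single-valuedness of $\mathrm{A}_s$ and $\mathrm{B}_s$ on the annulus $R$. First, you have misplaced the branch cuts: the set $[z_1,z_2]\cup J$ lies inside the hole $I(\delta)$ of the annulus, so there is no jump ``across $[z_1,z_2]\cup J$'' to check for $z\in R$. The actual obstruction, identified in the paper, is that $\xi$, $\hat{\xi}$ and the odd coefficients $\hat{E}_{2s+1}$ (hence possibly the odd $\tilde{\mathcal{E}}_s$, $\mathcal{E}_s$) are discontinuous across the real axis to the \emph{left} of $z_1$, i.e.\ across $[x_4,z_1]\cap R$ (the segment $[-\rho_1,-\rho_0]$ in the paper's notation). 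Second, your proposed remedy --- directly computing the jumps of each assembled coefficient and showing they vanish --- is not carried out, and it is doubtful it can be completed as sketched: each $\mathrm{A}_s$, $\mathrm{B}_s$ is a polynomial combination of the cut-bearing quantities, so an order-by-order cancellation proof would need the discontinuity relations of \emph{all} odd coefficients together with the connection expansion \cref{eq36}, in effect re-proving \Cref{thm:lambda} and \Cref{lem:ratio} at every order; and an appeal to $\hat{\alpha}=\alpha+\mathcal{O}(u^{-2})$ cannot by itself establish an exact (non-asymptotic) analyticity property of the coefficients. The paper replaces all of this by a soft argument that is missing from your proposal: the $\mathrm{A}_s$, $\mathrm{B}_s$ are analytic in the cut annulus $R\setminus[-\rho_1,-\rho_0]$ and bounded in $R$, while $\mathcal{A}$ and $\mathcal{B}$ themselves, being defined by \cref{eq87,eq88} in terms of functions analytic there, are analytic --- in particular continuous --- across the cut; by the uniqueness of asymptotic expansions the coefficients must then be continuous across the cut too, hence analytic throughout $R$. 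Only with that established does \Cref{thm:nopoles} apply as you describe. (Your parenthetical claim that \cref{eq97} is ``precisely'' the mechanism of this last step is a conflation: \cref{eq97} is the separate Cauchy-integral device for numerically evaluating $\mathcal{A}$, $\mathcal{B}$ inside $I(\delta)$, not part of the proof of \Cref{thm:tp-series}; this is harmless but worth noting.)
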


\begin{remark}
From \cref{eq10}, \cref{eq36}, \cref{eq85}, \cref{eq91}, \cref{eq92}, \cref{lem:ratio}, and \cite[Eq. 5.11.1]{NIST:DLMF}, it follows that
\begin{equation}
\label{eq106}
\frac{K_{n,1}(u,a)}{K_{n,2}(u,a)}
= i \exp\left\{ \sum_{s=0}^{n-1}
\frac{l_{2s+1}(a)}{u^{2s+1}} \right\}
\left\{1+ \mathcal{O}
\left(\frac{1}{u^{2n+1}}\right)\right\}
\quad (u \to \infty),
\end{equation}
under the weaker condition $a_{0}<a_{0}+\delta \leq a \leq a_{1}$. 
\end{remark}

\begin{proof}

Firstly, from \cref{eq18,eq20,eq24,eq25,eq26},
\begin{equation} 
\label{eq107}
f^{1/4}(a,z) e^{u \xi}w_{2}(u,a,z) 
\to \gamma_{2}(u,a)
\quad (z \to x_{2}),
\end{equation}
where $\gamma_{2}(u,a)$ is given by
\begin{equation}
\label{eq108}
\gamma_{2}(u,a) 
=  k_{n,2}(u,a) \exp \left\{\sum\limits_{s=1}^{n-1}{(-1)^{s}
\frac{\hat{E}_{s}(a,x_{2})}{u^{s}}}
\right\} \left\{1+\eta_{n,2}(u,a,x_{2}) \right\},
\end{equation}
for arbitrary positive integer $n$; observe from \cref{eq107} that $\gamma_{2}(u,a)$ is independent of $n$. Hence from \cref{eq93}
\begin{multline} 
\label{eq109}
K_{n,2}(u,a)
= \frac{1}{\sqrt{2\pi}}
\left(\frac{2e}{u\hat{\alpha}^{2}}\right)
^{\frac14u\hat{\alpha}^{2}}
\Gamma\left(\tfrac{1}{2}u\hat{\alpha}^{2}
+\tfrac{1}{2}\right) \gamma_{2}(u,a)
\\ \times
\exp \left\{\sum\limits_{s=1}^{n-1}{(-1)^{s-1}
\frac{\hat{E}_{s}(a,x_{2})}{u^{s}}}
\right\} 
\left\{1+ \mathcal{O}
\left(\frac{1}{u^{n}}\right)\right\}
\quad (u \to \infty).
\end{multline}
Now rewrite \cref{eq90} in the form
\begin{multline}
\label{eq110}
\mathcal{A}(u,a,z) = 
\frac{(2u)^{1/4}}{2}K_{n,2}(u,a)
\left(\frac{\zeta^{2}-\hat{\alpha}^{2}}
{f(a,z)}\right)^{1/4}
\\ \times
\left[\exp\left\{u(\hat{\xi}-\xi)
+\sum_{s=1}^{n-1}
(-1)^{s}\frac{\tilde{\mathcal{E}}_{s}(a,z)}{u^{s}}\right\}
\right.
\\
\left.
-i \frac{K_{n,1}(u,a)}{K_{n,2}(u,a)}
\exp\left\{-u(\hat{\xi}-\xi)
+\sum_{s=1}^{n-1}\frac{\tilde{\mathcal{E}}_{s}(a,z)}{u^{s}}\right\}
\right]\left\{1+ \mathcal{O}
\left(\frac{1}{u^{n}}\right)\right\}.
\end{multline}
Then, for $z \in D \setminus I(\delta)$ and $a_{0} \leq a \leq a_{1}$, one can show, using \cref{eq02,eq49,eq36,eq40,eq103,eq109}, that \cref{eq90} can be expanded into a formal asymptotic series of the form \cref{eq104}. The expansion \cref{eq105} can be obtained in a similar way.

Next, it follows from \cref{eq02,eq08,eq49,eq94,eq95} (and see also the comments after \cref{eq58,eq74}) that $\hat{\xi}$, $\xi$, $\tilde{\mathcal{E}}_{s}(a,z)$ and $\mathcal{E}_{s}(a,z)$ are analytic in $D$ except the first two (regarded as functions of $z$), and possibly the odd coefficients of the latter two, having points of discontinuity on the real axis to the left of $z=z_{1}$. As such, all are certainly analytic for $z \in D \setminus \{[x_{4},z_{1}] \cup I(\delta)\}$.

Consider now an annulus $R=\{z:\, \rho_{0}<|z-z_{0}|<\rho_{1}\} \subset D$, with $\rho_{0}$ defined by the hypothesis of the theorem. The definition of $\rho_{0}$ implies that $I(\delta)$ lies inside $D_{0}$ for sufficiently small positive $\delta$, and therefore does not lie within $R$. Thus, from above, $\hat{\xi}$, $\xi$, $\tilde{\mathcal{E}}_{s}(a,z)$ and $\mathcal{E}_{s}(a,z)$ are analytic in $R \setminus [-\rho_{1},-\rho_{0}]$, and are bounded in $R$.

Furthermore, from the re-expansion of \cref{eq110} into \cref{eq104}, and similarly from the construction of \cref{eq105}, which are both valid in $R$, we infer that each $\mathrm{A}_{s}(a,z)$ and $\mathrm{B}_{s}(a,z)$ is a polynomial in terms of $\hat{\xi}$, $\xi$, $\tilde{\mathcal{E}}_{s}(a,z)$ and $\mathcal{E}_{s}(a,z)$. Thus they too must be analytic in the cut annulus $R\setminus [-\rho_{1},-\rho_{0}]$, and bounded in $R$. However, $\mathcal{A}(u,a,z)$ and $\mathcal{B}(u,a,z)$ are analytic in $R$, and in particular continuous at $[-\rho_{1},-\rho_{0}]$, and so from \cref{eq104,eq105} we conclude that $\mathrm{A}_{s}(a,z)$ and $\mathrm{B}_{s}(a,z)$ must also be continuous at this interval. Accordingly, these coefficients are analytic at the cut, and hence throughout $R$.

To further extend this to $D_{1}=\{z:\, |z-z_{0}|<\rho_{1}\}$, and in particular to $I(\delta)$, we appeal to \cref{thm:nopoles}. From this it follows under the above assumptions that the coefficients $\mathrm{A}_{s}(a,z)$ and $\mathrm{B}_{s}(a,z)$ are actually analytic in $D_{1}$, which contains both turning points (whether separated or not). Moreover, the asymptotic expansions \cref{eq104,eq105} hold in this disk, and by extension to all points in $D \cup I(\delta)$. 
\end{proof}

Let us summarize the main results of this section. Consider the differential equation \cref{eq01} with the assumptions on $f(a,z)$ and $g(z)$ stated in the paragraph following that equation, and in particular, for $a_{0} \leq a \leq a_{1}$ $f(a,z)$ has simple zeros at $z=z_{1}$ and $z=z_{2}$, which coalesce as $a \to a_{0}$. Define $\xi$, $\zeta$, and $\alpha$ by \cref{eq02,eq08,eq11}. Let coefficients $\hat{E}_{s}(a,z)$ be given by \cref{eq21,eq22,eq23}. For each positive integer $n$ let $w_{j}(u,a,z)$ ($j=1,2,3,4$) be solutions of \cref{eq01} given by \cref{eq18,eq19,eq20,eq27,eq28}, which are recessive at the singularities $z=x_{j}$ ($j=1,2,3,4$) of \cref{eq01} which correspond to $\zeta=i\infty$, $\zeta=+\infty$, $\zeta=-i\infty$, and $\zeta=-\infty$, respectively. Next, $k_{n,j}(u,a)$ ($j=1,2,3$) and $k_{n,4}^{\pm}(u,a)$ are constants, continuous for $a_{0} \leq a \leq a_{1}$, chosen such that the connection formulas \cref{eq29,eq30} hold for some $\lambda$. Define $\hat{\xi}$ and $\hat{\beta}$ by \cref{eq49,eq52,eq53}, where $\hat{\alpha}$ is given by \cref{eq85}, and let coefficients $e_{s}(\hat{\alpha},\hat{\beta})$ and $\tilde{e}_{s}(\hat{\alpha},\hat{\beta})$ be given by \cref{eq56,eq57,eq73,eq74}, with subsequent ones given recursively by \cref{eq58}. Let $D$ be a domain that surrounds, but does not contain, the set $I(\delta)$ that contains the turning points (defined by \cref{defn:I}), and is given by \cref{eq89} along with certain LG regions of validity described at the beginning of \cref{sec:LG}.

Then $w_{j}(u,a,z)$ can be expressed in the forms \cref{eq82,eq83,eq84,eq86}, where, for $a_{0} \leq a \leq a_{1}$ and $z \in D$, $\mathcal{A}(u,a,z)$ and $\mathcal{B}(u,a,z)$ possess the asymptotic expansions \cref{eq90,eq91}. In these $K_{n,1}(u,a)$, $K_{n,2}(u,a)$, $\tilde{\mathcal{E}}_{s}(a,z)$ and $\mathcal{E}_{s}(a,z)$ are given by \cref{eq92,eq93,eq94,eq95}.

For points in $I(\delta)$, which contains the two turning points, the coefficient functions $\mathcal{A}(u,a,z)$ and $\mathcal{B}(u,a,z)$ can be evaluated via the Cauchy integrals \cref{eq97}, with the expansions \cref{eq90,eq91} used on the contour, which is a simple positively orientated loop in $D$ which surrounds $z$ and both turning points.

The expansions \cref{eq90,eq91} can be expanded in the forms \cref{eq104,eq105}, under conditions given in \cref{thm:tp-series}. If applicable, this gives an alternative way to asymptotically evaluate $\mathcal{A}(u,a,z)$ and $\mathcal{B}(u,a,z)$ in $I(\delta)$.

\section{Associated Legendre functions}
\label{sec:Legendre}

The associated Legendre function of the first and second kinds $P^{-\mu}_{\nu}(z)$ and $\boldsymbol{Q}^{\mu}_{\nu}(z)$ have the explicit representations given in \cite[Eqs. 14.3.6, 14.3.7, 14.3.10]{NIST:DLMF}. These functions are solutions of the associated Legendre differential equation
\begin{equation}
\label{eq111}
(z^2-1)\frac{d^2 y}{d z^2} +2z\frac{dy}{dz} 
-\left\{\nu(\nu + 1)+\frac{\mu^2}{z^2-1}\right\}y=0,
\end{equation}
which has regular singularities at $z=\pm 1$ and $z=\infty$.

The behavior of these functions at the singularities is as follows, and throughout we assume $\mu,\nu \geq 0$. We have (see, for example, \cite[Eqs. 14.8.7 and 14.8.15]{NIST:DLMF})
\begin{equation}
\label{eq112}
P^{-\mu}_{\nu}(z)=
\frac{1}{\Gamma(\mu+1)}
\left(\frac{z-1}{2}\right)^{\mu/2}
\left\{1+\mathcal{O}(z-1)\right\}
\quad (z \rightarrow 1),
\end{equation}
\begin{equation}
\label{eq113}
P^{-\mu}_{\nu}(z)
=\frac{\Gamma\left(\nu+\tfrac{1}{2}\right)(2z)^{\nu}}
{\sqrt{\pi}\,\Gamma(\nu+\mu+1)}
\left\{1+\mathcal{O}
\left(\frac{1}{z}\right)\right\}
\quad
(z \rightarrow \infty),
\end{equation}
and
\begin{equation}
\label{eq114}
\boldsymbol{Q}^{\mu}_{\nu}(z)
=\frac{\sqrt{\pi}}
{\Gamma\left(\nu+\frac{3}{2}\right)
(2z)^{\nu+1}}
\left\{1+\mathcal{O}
\left(\frac{1}{z}\right)\right\}
\quad \left(z \rightarrow \infty \right).
\end{equation}
These two functions are recessive at the respective singularities, and they form a numerically satisfactory pair of solutions of (\ref{eq111}) in the half-plane $|\arg(z)|\leq \frac12 \pi$ (see \cite[Chap. 5, Thm. 12.1]{Olver:1997:ASF}).

Connection formulas relating the functions are required and read as follows (see \cite[Eq. 14.24.1]{NIST:DLMF}):
\begin{equation}
\label{eq115}
P^{-\mu}_{\nu}\left(ze^{\pm \pi i}\right)
=e^{\pm\nu\pi i}P^{-\mu}_{\nu}(z)
+ \frac{2
}{\Gamma(\mu-\nu)}
\boldsymbol{Q}^{\mu}_{\nu}(z),
\end{equation}
from which
\begin{equation}
\label{eq116}
2i\sin(\nu \pi)P^{-\mu}_{\nu}(z)
=P^{-\mu}_{\nu}\left(ze^{\pi i}\right)
-P^{-\mu}_{\nu}\left(ze^{-\pi i}\right).
\end{equation}
Also,
\begin{equation}
\label{eq117}
\boldsymbol{Q}^{\mu}_{\nu,\pm 1}(z)
=e^{\mp \mu\pi i}\boldsymbol{Q}^{\mu}_{\nu}(z)
\mp \frac{\pi i}{\Gamma(\nu-\mu+1)}
P^{-\mu}_{\nu}(z).
\end{equation}

Next, Ferrers functions are real-valued solutions of (\ref{eq111}), which for $z=x \in (-1,1)$ are given by (see \cite[Eqs. 14.23.4 and 14.23.5]{NIST:DLMF})
\begin{equation}
\label{eq118}
\mathsf{P}_{\nu}^{-\mu}(x)
=e^{\mp \frac12 \mu \pi i}
P_{\nu}^{-\mu}(x \pm i0),
\end{equation}
and
\begin{equation}
\label{eq119}
\mathsf{Q}_{\nu}^{-\mu}(x)
=\tfrac12 \Gamma(\nu-\mu+1)\left\{e^{\frac12\mu \pi i}
\boldsymbol{Q}_{\nu}^{\mu}(x+i0)
+e^{-\frac12\mu \pi i}\boldsymbol{Q}_{\nu}^{\mu}(x-i0)\right\},
\end{equation}
where $f(x \pm i0)=\lim_{\epsilon \to 0+}f(x \pm i\epsilon)$. From \cref{eq113,eq118} we have
\begin{equation}
\label{eq120}
\mathsf{P}^{-\mu}_{\nu}(x)=
\frac{1}{\Gamma(\mu+1)}
\left(\frac{1-x}{2}\right)^{\mu/2}
\left\{1+\mathcal{O}(x-1)\right\}
\quad (x \rightarrow 1^{-}).
\end{equation}
Thus, $\mathsf{P}_{\nu}^{-\mu}(\pm x)$ form a numerically satisfactory pair of solutions in $(-1,1)$ for $\mu \geq 0$ since $\mathsf{P}_{\nu}^{-\mu}(\pm x)$ is recessive at $x= \pm 1$. In addition, $\mathsf{P}_{\nu}^{-\mu}(x)$ and $\mathsf{Q}_{\nu}^{-\mu}(x)$ form a numerically satisfactory pair of solutions in $[0,1)$ for $\mu \geq 0$.

We require four solutions of \cref{eq111} that are recessive at $z=\pm 1$ and $z= \pm i\infty$. In terms of the general equation of \cref{sec:LG} these singularities are identified with $z= x_{2}$, $z= x_{4}$, $z= x_{1}$ and  $z= x_{3}$, respectively. We also require all four to be continuous across the cut $[-1,1]$. The first of these is $\mathsf{P}^{-\mu}_{\nu}(z)$ as defined by the hypergeometric series \cite[Eq. 14.3.1]{NIST:DLMF} for complex $x=z$ with $|z|<1$, and by analytic continuation elsewhere. This is analytic in the plane that has cuts along $(-\infty,-1]$ and $[1,\infty)$, and is, of course, equal to the real-valued Ferrers function $\mathsf{P}^{-\mu}_{\nu}(x)$ when $x \in (-1,1)$. The second fundamental solution is simply $\mathsf{P}^{-\mu}_{\nu}(-z)$, which is recessive at $z=-1$.

Next, we look for a solution that is recessive at $z=i\infty$ and continuous across the cut $[-1,1]$. The function $\boldsymbol{Q}^{\mu}_{\nu}(z)$ is indeed recessive at $z=i\infty$, but not continuous across the cut. So we define our desired function, labeled $\mathsf{Q}^{\mu}_{\nu,1}(z)$, to be equal to $\boldsymbol{Q}^{\mu}_{\nu}(z)$ in the upper half-plane and equal to the analytic continuation of that function across the cut $[-1,1]$. Consequently,
\begin{equation}
\label{eq121}
\mathsf{Q}^{\mu}_{\nu,1}(z)
=
    \begin{cases}
        \boldsymbol{Q}^{\mu}_{\nu}(z) & \Im(z) \geq 0\\
        \boldsymbol{Q}^{\mu}_{\nu,1}(z) & \Im(z) < 0
    \end{cases}\,.
\end{equation}
This fulfills the requirement of being continuous, in fact analytic, in the $z$ plane having cuts along $(-\infty,-1]$ and $[1,\infty)$, and is recessive at $z = i\infty$ (or more precisely as $z \to \infty$ for $0 \leq \arg(z) \leq \pi$).

Finally, for the fourth fundamental solution, define $\mathsf{Q}^{\mu}_{\nu,-1}(z)$ as the conjugate of $\mathsf{Q}^{\mu}_{\nu,1}(z)$, namely,
\begin{equation}
\label{eq122}
\mathsf{Q}^{\mu}_{\nu,-1}(z)
=
    \begin{cases}
        \boldsymbol{Q}^{\mu}_{\nu,-1}(z) & \Im(z) > 0\\
        \boldsymbol{Q}^{\mu}_{\nu}(z) & \Im(z) \leq 0
    \end{cases}\,,
\end{equation}
this being recessive as $z \to \infty$ for $-\pi \leq \arg(z) \leq 0$. Note that $\mathsf{Q}^{\mu}_{\nu,\pm 1}(z)$ are generally not real for $z\in (-1,1)$.

Now, from \cite[Eq. 14.23.2]{NIST:DLMF} one can show that
\begin{equation}
\label{eq123}
\mathsf{P}^{-\mu}_{\nu}(z)=\frac{i\Gamma(\nu-\mu+1)}{\pi}
\left\{e^{\frac{1}{2}\mu\pi i}\mathsf{Q}^{\mu}_{\nu,1}(z)
-e^{-\frac{1}{2}\mu\pi i}\mathsf{Q}^{\mu}_{\nu,-1}(z)\right\}.
\end{equation}
Similarly, from \cref{eq119,eq121,eq122}, we have
\begin{equation}
\label{eq124}
\mathsf{Q}_{\nu}^{-\mu}(z)
=\tfrac12 \Gamma(\nu-\mu+1)\left\{e^{\frac12\mu \pi i}
\mathsf{Q}^{\mu}_{\nu,1}(z)
+e^{-\frac12\mu \pi i}\mathsf{Q}^{\mu}_{\nu,-1}(z)\right\}.
\end{equation}
Thus from \cref{eq123,eq124}, and \cite[Eq. 14.9.10]{NIST:DLMF}
\begin{equation}
\label{eq125}
\mathsf{P}^{-\mu}_{\nu}(-z)=\frac{i\Gamma(\nu-\mu+1)}{\pi}
\left\{e^{\frac{1}{2}(2\nu-\mu)\pi i}
\mathsf{Q}^{\mu}_{\nu,1}(z)
-e^{-\frac{1}{2}(2\nu-\mu)\pi i}
\mathsf{Q}^{\mu}_{\nu,-1}(z)\right\}.
\end{equation}

From \cref{eq125,eq113,eq117,eq121,eq122,eq123} we then find that
\begin{equation}
\label{eq126}
\mathsf{P}^{-\mu}_{\nu}(z)
=\frac{e^{\mp \mu \pi i/2}
\Gamma\left(\nu+\tfrac{1}{2}\right)(2z)^{\nu}}
{\sqrt{\pi}\,\Gamma(\nu+\mu+1)}
\left\{1+\mathcal{O}
\left(\frac{1}{z}\right)\right\}
\quad
(z \rightarrow \pm i\infty),
\end{equation}
and
\begin{equation}
\label{eq127}
\mathsf{P}^{-\mu}_{\nu}(-z)
=\frac{e^{\mp (2\nu -\mu) \pi i/2}
\Gamma\left(\nu+\tfrac{1}{2}\right)(2z)^{\nu}}
{\sqrt{\pi}\,\Gamma(\nu+\mu+1)}
\left\{1+\mathcal{O}
\left(\frac{1}{z}\right)\right\}
\quad
(z \rightarrow \pm i\infty).
\end{equation}

Next, let
\begin{equation}
\label{eq128}
u=\nu+\frac12, \; 1-a^2=\frac{\mu^2}{u^2}.
\end{equation}
Then from \cref{eq111} we have $(1-z^2)^{1/2}\mathsf{P}^{-\mu}_{\nu}(\pm z)$ and $(1-z^2)^{1/2}\mathsf{Q}^{\mu}_{\nu,\pm 1}(z)$ as solutions of \cref{eq01}, where
\begin{equation}
\label{eq129}
f(a,z)=\frac{z^2-a^2}{\left(1-z^2\right)^2}, \;
g(z)=-\frac{3+z^2}{4\left (1-z^2\right )^2}.
\end{equation}

We assume for arbitrary fixed $\delta>0$ that
\begin{equation}
\label{eq130}
0 \leq a \leq 1-\delta <1,
\end{equation}
which is equivalent to $\delta' u \leq \mu \leq u$ for arbitrary fixed $\delta' \in (0,1)$. Thus, we can identify the turning points $z=z_{j}$ ($j=1,2$) of \cref{sec:Introduction} with $z=\pm a$, and from \cref{eq130} $a_{0}=0$ and $a_{1}=1-\delta<1$. Note that the turning points coalesce as $a \to 0$, which from \cref{eq128} is equivalent to $\mu=u=\nu+\frac12$. Also, the upper bound in \cref{eq130} ensures that the turning points are bounded away from the singularities $z=\pm 1$. This situation was studied in \cite{Boyd:1986:TPS}.

Now, from \cref{eq02,eq129}
\begin{multline}
\label{eq131}
\xi =\int_{a}^{z} f^{1/2}(a,t) dt
=\int_{a}^{z} \frac{\left(t^2-a^2\right)^{1/2}}
{1-t^2} dt
\\
=\frac{\mu}{2u}\ln\left\{\frac{\left(uz+\mu X+ua^2\right)
\left(uz+\mu X-ua^2\right)}{\left(1-z^2\right)}\right\}
-\ln\left\{u(z+X)\right\}
-\frac{(u-\mu)\ln(ua)}{u},
\end{multline}
where
\begin{equation}
\label{eq132}
X=\left(z^2-a^2\right)^{1/2}.
\end{equation}
The branch is such that $\xi$ is positive for $a < z <1$ and continuous in the $z$ plane having cuts along $(-\infty,a]$ and $[1,\infty)$. An equivalent representation, which is useful for $a \leq z <1$, is given by
\begin{equation}
\label{eq133}
\xi=\left(1-a^{2}\right)^{1/2}\mathrm{arctanh}
\left\{\frac{1}{z}\left(\frac{z^{2}-a^{2}}{1-a^{2}}\right)^{1/2}\right\}
-\mathrm{arccosh}\left(\frac{z}{a}\right).
\end{equation}
Note in the limit $a \to 0$
\begin{equation}
\label{eq134}
\xi =- \tfrac12
\ln \left(1-z^2\right).
\end{equation}

We next record the following limiting forms:
\begin{equation}
\label{eq135}
\xi=\frac{2\sqrt{2a}}{3\left(1-a^2\right)}
(z-a)^{3/2}+\mathcal{O}\left\{(z-a)^2\right\}
\quad (z \to a),
\end{equation}
and
\begin{multline}
\label{eq136}
\xi=-\frac12 \left(1-a^2\right)^{1/2}\ln\left\{\frac{1-z}
{2\left(1-a^2\right)}\right\}
-\ln\left\{1+\left(1-a^2\right)^{1/2}\right\}
\\
+\left\{1-\left(1-a^2\right)^{1/2}\right\}\ln(a)
+\mathcal{O}(z-1)
\quad (z \to 1),
\end{multline}
which has the following limit when $a=0$
\begin{equation}
\label{eq137}
\xi=-\tfrac{1}{2}\ln\left\{2(1-z)\right\}
+\mathcal{O}(1-z)
\quad (z \to 1,\, a=0).
\end{equation}
In addition,
\begin{multline}
\label{eq138}
\xi=-\ln(2z)
+\left\{1-\left(1-a^2\right)^{1/2}\right\}\ln(a)
+\left(1-a^2\right)^{1/2}\ln\left\{1+\left(1-a^2\right)^{1/2}\right\}
\\
\pm \frac12\left(1-a^2\right)^{1/2}  \pi i
+\mathcal{O}\left(\frac1z\right)
\quad \left(z \to \infty,\, \Im(z) \gtrless 0\right),
\end{multline}
again with the limit applying when $a=0$, that is, $\xi=-\ln(z)\pm \frac12  \pi i+\mathcal{O}(z^{-1})$.

Consider now the LG expansions \cref{eq18,eq28,eq19,eq20,eq27}. We do not require to determine the full regions of validity $Z_{j}$ of the error term bounds \cref{eq24,eq25,eq26}. All we need are the following subsets of the regions of validity, which are straightforward to verify from the integral representation \cref{eq131} of $\xi$. Firstly, $Z_{1}\cap Z_{2}$ contains all points in the first quadrant $0 \leq \arg(z) \leq \frac12 \pi$, except for a small neighborhood of the interval $[0,a]$. Next, $Z_{1}\cap Z_{4}^{+}$ contains all points in the second quadrant $\frac12 \pi \leq \arg(z) \leq \pi$, except for a small neighborhood of the interval $[-a,0]$. Finally, $Z_{3}\cap Z_{2}$ and $Z_{3}\cap Z_{4}^{-}$ are conjugates of $Z_{1}\cap Z_{2}$ and $Z_{1}\cap Z_{4}^{+}$, respectively. Thus, from \cref{eq89} we see that the region $D$, which we shall use for our parabolic cylinder function approximations, consists of all points in the $z$ plane having cuts along $(-\infty,-1]$ and $[1,\infty)$ and with a small neighborhood of the interval $[-a,a]$ removed.

Next, to evaluate the coefficients in the LG expansions, we first find from \cref{eq04,eq129} that
\begin{equation}
\label{eq139}
\psi(a,z)=\frac{\left(1-z^2\right)\left\{(4a^2-3)z^2-a^2(2-a^2)\right\}}{4\left(z^2-a^2\right)^3}.
\end{equation}
Note that $\psi(a,z)=\mathcal{O}(1-z)$ as $z \to 1$, and also
\begin{equation}
\label{eq140}
\psi(a,z)=\frac{3-4a^2}{4z^2}+
\mathcal{O}\left(\frac{1}{z^4}\right)
\quad (z \to \infty),
\end{equation}
and both of these are consequences of $f(a,z)$ and $g(z)$, as defined by \cref{eq129}, satisfying the conditions at the beginning of \cref{sec:LG} at the singularities $z=\pm 1$ and $z=\infty$.

Now, let us evaluate the coefficients given by \cref{eq21,eq22,eq23}. In order to do so, similarly to (\ref{eq52}), let
\begin{equation}
\label{eq141}
\beta=\frac{z}{a^{2}\left(z^{2}-a^{2}\right)^{1/2}}-\frac{1}{a^{2}}
\quad (a>0),
\end{equation}
and
\begin{equation} 
\label{eq142}
\beta=\frac{1}{2z^{2}}
\quad (a=0).
\end{equation}
Then we denote $\hat{E}_{s}(a,z)=E_{s}(a,\beta)$ ($s=1,2,3,\ldots$), and so from \cref{eq21,eq22,eq23,eq139,eq141}, after some calculation, we obtain
\begin{equation} 
\label{eq143}
E_{1}(a,\beta)=\tfrac{1}{24}\beta
\left\{5a^{4}\left(1-a^{2}\right)\beta^{2}
+15a^{2}\left(1-a^{2}\right)\beta-12a^{2}+9\right\},
\end{equation}
\begin{multline} 
\label{eq144}
E_{2}(a,\beta)=\tfrac{1}{16}\beta\left(a^{2}\beta+2\right)
\left\{a^{2}\left(1-a^{2}\right)\beta^{2}
+2\left(1-a^{2}\right)\beta-1\right\}
\\ \times
\left\{5a^{2}\left(1-a^{2}\right)\beta^{2}
+10a^{2}\left(1-a^{2}\right)\beta-4a^{2}+3\right\},
\end{multline}
and for $s=2,3,4,\ldots$
\begin{equation} 
\label{eq145}
E_{s+1}(a,\beta) =
\frac{1}{2}G(a,\beta)
\frac{\partial E_{s}(a,\beta)}
{\partial \beta}
+\frac{1}{2}\int_{0}^{\beta}G(a,p)
\sum\limits_{j=1}^{s-1}
\frac{\partial E_{j}(a,p)}{\partial p}
\frac{\partial E_{s-j}(a,p)}{\partial p} dp,
\end{equation}
where
\begin{equation} 
\label{eq146}
G(a,\beta)=-d\beta/d \xi
=\beta\left(a^{2}\beta+2\right)
\left\{a^{2}\left(1-a^{2}\right)\beta^{2}+2\left(1-a^{2}\right)\beta-1\right\}.
\end{equation}

Note that our choice of integration constants imply that $E_{s}(a,0)=0$ ($s=1,2,3,\ldots$). In addition, the odd coefficients $E_{2s+1}(a,\beta)$ ($s=0,1,2,\ldots$), considered as functions of $z$, are analytic for all points bounded away from the interval $[-a,a]$, since $\beta=\beta(z)$ is analytic in this region and each $E_{s}(a,\beta)$ is a polynomial in $\beta$. Thus, from \cref{eq36,eq85}$, \hat{\alpha}=\alpha+\mathcal{O}(u^{-n})$ as $u \to \infty$ for any positive integer $n$. In fact, we shall show that $\hat{\alpha}=\alpha$ exactly. 

Next, we identify solutions of (\ref{eq111}) recessive at $z=\pm i \infty$ and $z=1$, with the corresponding LG expansions \cref{eq18,eq28,eq19,eq20,eq27}, which yields
\begin{equation} 
\label{eq147}
\mathsf{Q}^{\mu}_{\nu,1}(z)
=\sqrt{\frac{\pi}{2}}\frac{e^{-\frac12 \mu\pi i}u^{u}}
{\left(u^{2}-\mu^{2}\right)^{\frac12 u}\Gamma(u+1)}
\left(\frac{u-\mu}{u+\mu}\right)^{\frac12\mu}
\frac{W_{n,1}(u,a,z)}{\left(z^{2}-a^{2}\right)^{1/4}},
\end{equation}
\begin{equation} 
\label{eq148}
\mathsf{Q}^{\mu}_{\nu,-1}(z)
=\sqrt{\frac{\pi}{2}}\frac{e^{\frac12 \mu\pi i}u^{u}}
{\left(u^{2}-\mu^{2}\right)^{\frac12 u}\Gamma(u+1)}
\left(\frac{u-\mu}{u+\mu}\right)^{\frac12\mu}
\frac{W_{n,3}(u,a,z)}{\left(z^{2}-a^{2}\right)^{1/4}},
\end{equation}
and
\begin{multline} 
\label{eq149}
\mathsf{P}^{-\mu}_{\nu}(z)
=\frac{\left(u^{2}-\mu^{2}\right)^{\frac12 u}\,
\Gamma(u)}{\sqrt{2\pi}\,u^u\Gamma\left(u+\mu+\tfrac12\right)}
\left(\frac{u+\mu}{u-\mu}\right)^{\frac12 \mu}
\frac{W_{n,2}(u,a,z)}{\left(z^{2}-a^{2}\right)^{1/4}}
\\ \times
\left\{1+\eta_{n,2}(u,a,\infty) \right\}.
\end{multline}
In these, we determined the proportionality constants from \cref{eq114,eq121,eq122,eq126,eq128,eq138}. Note that, similarly to the LG functions of \cref{sec:PCF}, $W_{n,1}(u,a,z)$ and $W_{n,3}(u,a,z)$ are actually independent of $n$, but for consistency of notation, we keep this in their subscripts.

Alternatively, for (\ref{eq149}) we can find the constant as $z \to 1$ using \cref{eq120,eq136} in place of \cref{eq126,eq138} to get
\begin{multline} 
\label{eq150}
\mathsf{P}^{-\mu}_{\nu}(z)
=\left(\frac{\mu}{u}\right)^{1/4}
\frac{\mu^{\mu}}{\left(u^{2}-\mu^{2}\right)^{\frac12 \mu}\Gamma(\mu+1)}
\left(\frac{u-\mu}{u+\mu}\right)^{\frac12 u}
\frac{W_{n,2}(u,a,z)}{\left(z^{2}-a^{2}\right)^{1/4}}
\\ \times
\exp \left\{\sum\limits_{s=1}^{n-1}{(-1)^{s-1}
\frac{E_{s}(a,\beta_{1})}{u^{s}}}
\right\},
\end{multline}
where from \cref{eq128,eq141}
\begin{equation} 
\label{eq151}
\beta_{1}=\beta(\pm 1)
=\frac{u^{2}}{\mu(u+\mu)}.
\end{equation}

Similarly,
\begin{multline} 
\label{eq152}
\mathsf{P}^{-\mu}_{\nu}(-z)
=\frac{e^{\mp (\nu -\mu) \pi i}
\left(u^{2}-\mu^{2}\right)^{\frac12 u}\,
\Gamma(u)}{\sqrt{2\pi}\,u^u\Gamma\left(u+\mu+\tfrac12\right)}
\left(\frac{u+\mu}{u-\mu}\right)^{\frac12 \mu}
\frac{W_{n,4}^{\pm}(u,a,z)}{\left(z^{2}-a^{2}\right)^{1/4}}
\\ \times
\left\{1+\eta_{n,4}^{\pm}(u,a,\pm i\infty) \right\},
\end{multline}
or, alternatively,
\begin{multline} 
\label{eq153}
\mathsf{P}^{-\mu}_{\nu}(-z)
=e^{\mp \frac12 (u\alpha^2-1) \pi i}\left(\frac{\mu}{u}\right)^{1/4}
\frac{\mu^{\mu}}{\left(u^{2}-\mu^{2}\right)^{\frac12 \mu}\Gamma(\mu+1)}
\left(\frac{u-\mu}{u+\mu}\right)^{\frac12 u}
\\ \times
\frac{W_{n,4}^{\pm}(u,a,z)}{\left(z^{2}-a^{2}\right)^{1/4}}
\exp \left\{\sum\limits_{s=1}^{n-1}{(-1)^{s}
\frac{E_{s}(a,\beta_{1})}{u^{s}}}
\right\},
\end{multline}
noting that $(z^{2}-a^{2})^{1/4} \to e^{\pm \pi i/2}(1-a^{2})^{1/4}$ as $z \to -1 \pm i0$ with our choice of branches.

On comparing \cref{eq18,eq28,eq19,eq20,eq27} with \cref{eq147,eq148,eq149,eq152} we can then identify
\begin{equation}
\label{eq154}
w_{j}(u,a,z)=\left(1-z^{2}\right)^{1/2}e^{\pm \frac{1}{2}(\mu+1)\pi i}
\mathsf{Q}^{\mu}_{\nu,\pm 1}(z),
\end{equation}
for $j=1,3$, respectively, and 
\begin{equation}
\label{eq155}
w_{j}(u,a,z)=\frac{\pi}
{\Gamma(\nu-\mu+1)}
\left(1-z^{2}\right)^{1/2}
\mathsf{P}^{-\mu}_{\nu}(\pm z),
\end{equation}
for $j=2,4$, respectively. Furthermore, from \cref{eq147,eq148,eq154,eq155}
\begin{equation}
\label{eq156}
k_{n,j}(u,a)=\pm i\sqrt{\frac{\pi}{2}}
\frac{u^{u}}{\left(u^{2}-\mu^{2}\right)^{\frac12 u}
\Gamma(u+1)}\left(\frac{u-\mu}{u+\mu}\right)^{\frac12\mu},
\end{equation}
for $j=1,3$, respectively, these being independent of $n$; as above for $W_{n,1}(u,a,z)$ and $W_{n,3}(u,a,z)$, we keep this in the subscripts. 

Similarly, from \cref{eq149,eq152,eq154,eq155},
\begin{multline}
\label{eq157}
k_{n,2}(u,a)=\sqrt{\frac{\pi}{2}}
\frac{\left(u^{2}-\mu^{2}\right)^{\frac12 u}\,
\Gamma(u)}{u^u\Gamma\left(u+\mu+\tfrac12\right)
\Gamma\left(u-\mu+\tfrac12\right)}
\left(\frac{u+\mu}{u-\mu}\right)^{\frac12 \mu}
\\ \times
\left\{1+\eta_{n,2}(u,a,\infty) \right\},
\end{multline}
and 
\begin{equation}
\label{eq158}
k_{n,4}^{\pm}(u,a)=e^{\mp (u-\mu-\frac12)\pi i}k_{n,2}(u,a)
\left\{
\frac{1+\eta_{n,4}^{\pm}(u,a,\pm i\infty)}
{1+\eta_{n,2}(u,a,\infty)}
\right\}.
\end{equation}

Now let us construct our main results, namely asymptotic expansions for the Legendre functions which are valid in a domain containing the turning points. Firstly, from (\ref{eq11}), we have
\begin{equation} 
\label{eq159}
\alpha = \left\{\frac{2}{\pi}
\int_{-a}^{a} \frac{\left(a^2-t^2\right)^{1/2}}
{1-t^2} dt\right\}^{1/2}
=\left[ 2\left\{1-\left(1-a^{2}\right)^{1/2}\right\} \right]^{1/2},
\end{equation}
i.e.
\begin{equation} 
\label{eq160}
\tfrac12 \alpha^2
=1-\left(1-a^{2}\right)^{1/2},
\end{equation}
which from (\ref{eq128}) yields
\begin{equation} 
\label{eq161}
\tfrac12u\alpha^{2}=\nu-\mu+\tfrac12.
\end{equation}
With this value, $\zeta=\zeta(z)$ is defined by (\ref{eq08}), where $\xi=\xi(z)$ is given by (\ref{eq131}) and $\alpha$ by \cref{eq161}. Note that when $z=x \in (-a,a)$ ($a>0$) we have $\zeta \in (-\alpha,\alpha)$, such that
\begin{multline} 
\label{eq162}
\arcsin\left(\frac{x}{a}\right)
-\left(1-a^{2}\right)^{1/2}\arctan
\left\{x\left(\frac{1-a^{2}}{a^{2}-x^{2}}\right)^{1/2}\right\}
\\
=\frac{1}{2}\zeta\left(\alpha^{2}-\zeta^{2}\right)^{1/2}
+\frac{1}{2}\alpha^{2}\arcsin\left(\frac{\zeta}{\alpha}\right),
\end{multline}
on using (\ref{eq160}). Next, $\zeta \in [\alpha,\infty)$ when $z=x \in [a,1)$, and for $a>0$ is given implicitly by
\begin{multline}
\label{eq163}
\left(1-a^{2}\right)^{1/2}\mathrm{arctanh}
\left\{\frac{1}{x}\left(\frac{x^{2}-a^{2}}
{1-a^{2}}\right)^{1/2}\right\}
-\mathrm{arccosh}\left(\frac{x}{a}\right)
\\
=\frac{1}{2}\zeta\left(\zeta^{2}-\alpha^{2}\right)^{1/2}
-\frac{1}{2}\alpha^{2}\mathrm{arccosh}\left(\frac{\zeta}{\alpha}\right).
\end{multline}
Finally, $\zeta=\left\{-\ln\left(1-x^{2}\right)\right\}^{1/2}$ for 
$x \in (-1,1)$ when $a=0$.

If we compare \cref{eq158} to \cref{eq33,eq34} we infer that $\lambda \approx u-\mu-\frac12 = \nu - \mu$ as $u \to \infty$. In fact, from \cref{eq30,eq125,eq154,eq155} we see that $\lambda=\nu-\mu$ exactly, and hence from \cref{eq85,eq161}
\begin{equation}
\label{eq:alpha}
\tfrac12 u \hat{\alpha}^{2}
=\nu-\mu+\tfrac12
= \tfrac12 u \alpha^{2}
\implies \hat{\alpha} = \alpha.
\end{equation}

Next, using \cref{eq82,eq83,eq161,eq154,eq:alpha}, define $\mathcal{A}(u,a,z)$ and $\mathcal{B}(u,a,z)$ by the pair of equations
\begin{multline}
\label{eq164}
\mathsf{Q}^{\mu}_{\nu,\pm 1}(z)
=e^{\mp \frac12(\nu+1)\pi i}\Biggl\{
U\left(\nu-\mu+\tfrac{1}{2},\mp i\sqrt{2u}\,\zeta\right) 
\mathcal{A}(u,a,z)  \Biggr.
\\ \left.
+\frac{\partial U\left(\nu-\mu+\tfrac{1}{2},\mp i\sqrt{2u}
\,\zeta\right)}{\partial\zeta}\mathcal{B}(u,a,z)\right\}.
\end{multline}
Then from \cref{eq45,eq123,eq125}
\begin{multline}
\label{eq165}
\mathsf{P}^{-\mu}_{\nu}(\pm z)=
\sqrt{\frac{2}{\pi}}
\Biggl\{U\left(\mu-\nu-\tfrac{1}{2},
\pm \sqrt{2u}\,\zeta\right)\mathcal{A}(u,a,z) \Biggr.
\\ \left.
+\frac{\partial U\left(\mu-\nu-\tfrac{1}{2},\pm \sqrt{2u}\,
\zeta\right)}{\partial\zeta}\mathcal{B}(u,a,z)\right\}.
\end{multline}
Also from \cref{eq48,eq124,eq164}
\begin{multline}
\label{eq166}
\mathsf{Q}^{-\mu}_{\nu}(z)=
\sqrt{\frac{\pi}{2}}\, \Gamma(\nu-\mu+1)
\Biggl\{V\left(\mu-\nu-\tfrac{1}{2},
\sqrt{2u}\,\zeta\right)\mathcal{A}(u,a,z) \Biggr.
\\ \left.
+\frac{\partial V\left(\mu-\nu-\tfrac{1}{2},\sqrt{2u}\,
\zeta\right)}{\partial\zeta}\mathcal{B}(u,a,z)\right\}.
\end{multline}

On employing the parabolic cylinder function Wronskian relations \cref{eq43,eq44} we obtain, from solving the pair of equations \cref{eq164,eq165}, our desired explicit formulas
\begin{multline}
\label{eq167}
\mathcal{A}(u,a,z)=-\frac{e^{\frac12\mu\pi i}}{2\sqrt{u}}\left\{
e^{-\frac12(\nu-1)\pi i}\sqrt{\pi}\,\mathsf{P}^{-\mu}_{\nu}(\pm z)
\frac{\partial U\left(\nu-\mu+\tfrac12,-i\sqrt{2u}
\,\zeta\right)}{\partial\zeta} \right.
\\ \left. +
\sqrt{2}\,\mathsf{Q}^{\mu}_{\nu,1}(z)\frac{\partial 
U\left(\mu-\nu-\tfrac12,\pm \sqrt{2u}
\, \zeta\right)}{\partial\zeta}\right\},
\end{multline}
and
\begin{multline}
\label{eq168}
\mathcal{B}(u,a,z)=\frac{e^{\frac12 \mu\pi i}}
{2\sqrt{u}}\left\{
e^{-\frac12(\nu-1)\pi i}\sqrt{\pi}\,
\mathsf{P}^{-\mu}_{\nu}(\pm z)
U\left(\nu-\mu+\tfrac12,-i\sqrt{2u}
\,\zeta\right) \right.
\\ \left. +
\sqrt{2}\,\mathsf{Q}^{\mu}_{\nu,1}(z)
U\left(\mu-\nu-\tfrac12,\pm \sqrt{2u}\,\zeta\right)\right\}.
\end{multline}
Alternatively, on solving both of (\ref{eq165}),
\begin{multline}
\label{eq169}
\mathcal{A}(u,a,z)=\frac{\sqrt{2}\,\Gamma(\mu-\nu)}
{4\sqrt{u}}\left\{\mathsf{P}^{-\mu}_{\nu}(z)
\frac{\partial U\left(\mu-\nu-\tfrac12,
-\sqrt{2u}\,\zeta\right)}{\partial\zeta} \right.
\\ \left. -\mathsf{P}^{-\mu}_{\nu}(-z)
\frac{\partial U\left(\mu-\nu-\tfrac12,\sqrt{2u}
\,\zeta\right)}{\partial\zeta}\right\},
\end{multline}
and
\begin{multline}
\label{eq170}
\mathcal{B}(u,a,z)=-\frac{\sqrt{2}\,\Gamma(\mu-\nu)}{4\sqrt{u}}
\left\{\mathsf{P}^{-\mu}_{\nu}(z)
U\left(\mu-\nu-\tfrac12,-\sqrt{2u}
\,\zeta\right) \right.
\\ \left. -\mathsf{P}^{-\mu}_{\nu}(-z) 
U\left(\mu-\nu-\tfrac12,\sqrt{2u}
\,\zeta\right)\right\},
\end{multline}
which demonstrates that $\mathcal{A}(u,a,z)$ and $\mathcal{B}(u,a,z)$ are real for $z=x \in (-1,1)$, and analytic in the principal $z$ plane having cuts along $(-\infty,-1]$ and $[1,\infty)$.

Next, in a general setting, the asymptotic expansions for $\mathcal{A}(u,a,z)$ and $\mathcal{B}(u,a,z)$ come from \cref{eq90,eq91}. In this specific case, we can insert \cref{eq59,eq60,eq66,eq68,eq78,eq80,eq75,eq76,eq147,eq149} into \cref{eq167,eq168} (with upper signs taken), to arrive at
\begin{multline}
\label{eq171}
\mathcal{A}(u,a,z)=\left(\frac{\zeta^{2}
-\alpha^{2}}{z^{2}-a^{2}}\right)^{1/4}
\left[C_{1}(u,\alpha)\exp\left\{\sum\limits_{s=1}^{n-1}
\frac{\tilde{\mathcal{E}}_{s}(a,z)}{u^{s}}\right\}
\left\{1+\tilde{\delta}_{n,1}(u,a,z)\right\}
\right.
\\
\left.
+C_{2}(u,\alpha)\exp\left\{\sum\limits_{s=1}^{n-1}(-1)^{s}
\frac{\tilde{\mathcal{E}}_{s}(a,z)}{u^{s}}\right\}
\left\{1+\tilde{\delta}_{n,2}(u,a,z)\right\}
\right],
\end{multline}
and
\begin{multline}
\label{eq172}
\mathcal{B}(u,a,z)=\frac{1}
{u\left\{\left(z^{2}-a^{2}\right)
\left(\zeta^{2}-\alpha^{2}\right)\right\}^{1/4}}
\left[C_{1}(u,\alpha)\exp\left\{\sum\limits_{s=1}^{n-1}
\frac{\mathcal{E}_{s}(a,z)}{u^{s}}\right\}
\right.
\\
\left.
\times \left\{1+\delta_{n,1}(u,a,z)\right\}
-C_{2}(u,\alpha)\exp\left\{\sum
\limits_{s=1}^{n-1}(-1)^{s}
\frac{\mathcal{E}_{s}(a,z)}{u^{s}}\right\}
\left\{1+\delta_{n,2}(u,a,z)\right\}
\right],
\end{multline}
where
\begin{equation} 
\label{eq173}
C_{1}(u,\alpha)=\frac{2^{\frac{3}{4}}
\sqrt{\pi}\,e^{\frac{1}{2}(\mu-u)}
u^{u-\frac{3}{4}}(u+\mu)^{-\frac{1}{2}(u+\mu)}}
{4\,\Gamma(u)},
\end{equation}
\begin{equation} 
\label{eq174}
C_{2}(u,\alpha)=\frac{2^{\frac{1}{4}}
e^{\frac{1}{2}(u-\mu)}
u^{\frac{1}{4}-u}(u+\mu)^{\frac{1}{2}
(u+\mu)}\Gamma(u)}
{4\,\Gamma\left(u+\mu+\tfrac12\right)},
\end{equation}
\begin{equation} 
\label{eq175}
\tilde{\mathcal{E}}_{s}(a,z)
=E_{s}(a,\beta)+(-1)^{s}\tilde{e}_{s}
(\alpha,\hat{\beta}),
\end{equation}
\begin{equation} 
\label{eq176}
\mathcal{E}_{s}(a,z)
=E_{s}(a,\beta)+(-1)^{s}e_{s}
(\alpha,\hat{\beta}),
\end{equation}
\begin{equation} 
\label{eq177}
\tilde{\delta}_{n,1}(u,a,z)
=\left\{1+\eta_{n,1}(u,a,z) \right\}
\left\{1+\tilde{\eta}_{n,2}
(u,\alpha,\zeta) \right\}-1,
\end{equation}
\begin{equation} 
\label{eq178}
\tilde{\delta}_{n,2}(u,a,z)
=\left\{1+\tilde{\eta}_{n,1}
(u,\alpha,\zeta) \right\}
\left\{1+\eta_{n,2}(u,a,z) \right\}
\left\{1+\eta_{n,2}(u,a,\infty) \right\}-1,
\end{equation}
\begin{equation} 
\label{eq179}
\delta_{n,1}(u,a,z)
=\left\{1+\eta_{n,1}(u,a,z) \right\}
\left\{1+\hat{\eta}_{n,2}(u,\alpha,\zeta) \right\}-1,
\end{equation}
and
\begin{equation} 
\label{eq180}
\delta_{n,2}(u,a,z)
=\left\{1+\hat{\eta}_{n,1}
(u,\alpha,\zeta) \right\}
\left\{1+\eta_{n,2}(u,a,z) \right\}
\left\{1+\eta_{n,2}(u,a,\infty) \right\}-1,
\end{equation}
on recalling that $\hat{\alpha}=\alpha$ (see \cref{eq:alpha}), and $\mu=(1-a^2)^{1/2}u=(1-\frac12 \alpha^2)u$ (see \cref{eq128,eq160}).

As we showed in the more general setting, by using other pairs of \cref{eq167,eq168,eq169,eq170}, these expansions remain valid in the full domain $D$ which includes the singularities $z=\pm 1$ and $z=\pm i \infty$, and surrounds, but does not include, the interval $[-a,a]$ containing the turning points. The expansions have the same form and differ only in their error terms; each is $\mathcal{O}(u^{-n})$ as $u \to \infty$ uniformly in their respective subdomains.

Next, we express \cref{eq171,eq172} in neater forms and now drop the error terms (which all have simple explicit error bounds). To this end, we have from \cref{eq173,eq174}
\begin{equation} 
\label{eq181}
\sqrt{C_{1}(u,\alpha)C_{2}(u,\alpha)}
=\left(\frac{\pi}{u}\right)^{1/4}
\frac{\sqrt{2}}{4\,\sqrt{\Gamma(u+\mu+\tfrac12)}}.
\end{equation}
Now, from \cite[Eqs. 5.5.5 and 5.11.1]{NIST:DLMF}
\begin{equation} 
\label{eq182}
\frac12 \ln \left\{\frac{C_{1}(u,\alpha)}
{C_{2}(u,\alpha)}\right\}
\sim \sum_{s=0}^{\infty}\frac{d_{2s+1}(\alpha)}{u^{2s+1}}
\quad (u \to \infty),
\end{equation}
noting that only odd powers of $u$ appear here. The first three terms are found to be
\begin{equation} 
\label{eq183}
d_{1}(\alpha)=-\frac{9-2\alpha^{2}}
{24\left(4-\alpha^{2}\right)},
\end{equation}
\begin{equation} 
\label{eq184}
d_{3}(\alpha)=\frac{135- 96\alpha^2+ 24\alpha^4-2\alpha^6}
{720\left(4-\alpha^{2}\right)^{3}},
\end{equation}
and
\begin{equation} 
\label{eq185}
d_{5}(\alpha)=-\frac{2079 - 2560\alpha^2 + 1280\alpha^4 
- 320\alpha^6 + 40\alpha^8 - 2\alpha^{10}}
{2520\left(4-\alpha^{2}\right)^{5}}.
\end{equation}
Thus, using the elementary identity
\begin{equation} 
\label{eq186}
c_{1}w_{1}+c_{2}w_{2}=\sqrt{c_{1}c_{2}}
\left[\exp\left\{\tfrac12
\ln\left(c_{1}/c_{2}\right)\right\}w_{1}
+\exp\left\{-\tfrac12
\ln\left(c_{1}/c_{2}\right)\right\}w_{2}\right],
\end{equation}
we derive from \cref{eq171,eq172,eq181,eq182}
\begin{equation} 
\label{eq187}
\mathcal{A}(u,a,z) =
\frac{\pi^{1/4}}{u^{1/4}\sqrt{2 \Gamma(u+\mu+\tfrac12)}}
\left(\frac{\zeta^{2}-\alpha^{2}}{z^{2}-a^{2}}\right)^{1/4}
A(u,a,z),
\end{equation}
where
\begin{equation}
\label{eq188}
A(u,a,z) \sim 
\exp\left\{\sum\limits_{s=1}^{\infty}
\frac{\tilde{\mathcal{E}}_{2s}(a,z)}{u^{2s}}\right\}
\cosh\left\{\sum\limits_{s=0}^{\infty}
\frac{\tilde{\mathcal{E}}_{2s+1}(a,z)+d_{2s+1}(\alpha)}{u^{2s+1}}\right\},
\end{equation}
and 
\begin{equation} 
\label{eq189}
\mathcal{B}(u,a,z) =
\frac{\pi^{1/4}}{u^{1/4}\sqrt{2 \Gamma(u+\mu+\tfrac12)}}
\left(\frac{\zeta^{2}-\alpha^{2}}{z^{2}-a^{2}}\right)^{1/4}
B(u,a,z),
\end{equation}
where
\begin{multline} 
\label{eq190}
B(u,a,z) \sim \frac{1}{u \left(\zeta^{2}
-\alpha^{2}\right)^{1/2}}
\exp\left\{\sum\limits_{s=1}^{\infty}
\frac{\mathcal{E}_{2s}(a,z)}{u^{2s}}\right\}
\\ \times
\sinh\left\{\sum\limits_{s=0}^{\infty}
\frac{\mathcal{E}_{2s+1}(a,z)+d_{2s+1}(\alpha)}{u^{2s+1}}\right\}.
\end{multline}
These expansions are uniformly valid as $u \to \infty$ for $z$ lying in the principal complex plane having cuts along $(-\infty,-1]$ and $[1,\infty)$, with points on and close to $[-a,a]$ removed. For these excluded points, we can either use the Cauchy integral method \cref{eq97}, or in this case a re-expansion in inverse powers of $u^{2}$, as described next.

It can be easily verified that the conditions of \cref{thm:tp-series} are met here, and only even inverse powers of $u$ appear in the asymptotic series. Thus, the following expansions hold
\begin{equation} 
\label{eq191}
A(u,a,z) \sim 1 + \sum_{s=1}^{\infty}
\frac{\mathrm{A}_{2s}(a,z)}{u^{2s}},
\quad
B(u,a,z) \sim \frac{1}{u^2}\sum_{s=0}^{\infty}
\frac{\mathrm{B}_{2s}(a,z)}{u^{2s}},
\end{equation}
as $u \to \infty$, uniformly $0 \leq a \leq 1-\delta <1$ and for $z$ lying in the principal complex plane having cuts along $(-\infty,-1]$ and $[1,\infty)$. Here, $\mathrm{A}_{2s}(a,z)$ ($s=1,2,3,\ldots$) and $\mathrm{B}_{2s}(a,z)$ ($s=0,1,2,\ldots$) are analytic at all points in the cut plane, including at $z= \pm a$. These coefficients can be explicitly determined by formally expanding \cref{eq188,eq190} in inverse powers of $u$. For example, the leading terms of both sets of coefficients are given in \cref{eq193,eq194}. They are also analytic at $z=0$, and using (\ref{eq162}) we have for $|z|<1$ and $a>0$ that
\begin{multline}
\label{eq192}
\zeta=\frac{a}{\alpha}z-\frac{\alpha
\left(7\alpha^{2}-24\right)}{96\,a}z^{3}
+\frac{\alpha^{3}\left(317\alpha^4 - 2320\alpha^2 + 4160\right)}{30720\,a^{3}}z^{5}
\\
-\frac{\alpha^{5}\left(36683\alpha^6 
- 413224\alpha^4 + 1537088\alpha^2 
- 1881600\right)}{20643840\,a^{5}}z^{7}+\ldots,
\end{multline}
and hence from \cref{eq188,eq190}
\begin{multline}
\label{eq193}
\mathrm{A}_{2}(a,z)=\frac12
\left\{\tilde{\mathcal{E}}_{1}(a,z)+d_{1}(\alpha)\right\}^{2}
+\tilde{\mathcal{E}}_{2}(a,z)
\\
=-\frac{1}{8\left(4-\alpha^{2}\right)^{2}}
-\frac{19\alpha^{2}}{128\left(4-\alpha^{2}\right)^{3}}z^{2}
-\frac{\alpha^4 + 608\alpha^2 - 304}{1536\left(4-\alpha^{2}\right)^{4}}z^{4}
+\ldots,
\end{multline}
and
\begin{multline}
\label{eq194}
\mathrm{B}_{0}(a,z)
=\frac{\mathcal{E}_{1}(a,z)+d_{1}(\alpha)}
{\left(\zeta^{2}-\alpha^{2}\right)^{1/2}}
\\
=\frac{\alpha^{3}}{64a^{3}}z+\frac{\alpha^{5}\left(\alpha^{2}+12\right)}{3072a^{5}}z^{3}
-\frac{\alpha^{7}\left(19\alpha^4 - 32\alpha^2 - 1216\right)}{655360a^{7}}z^{5}
+\ldots.
\end{multline}

Now, again from (\ref{eq162}) (or (\ref{eq163})), we obtain for $|z- a|<1-a$ ($a>0$)
\begin{multline}
\label{eq195}
\zeta=\alpha+\varpi (z-a)
-\frac{\varpi}{10}\left\{\frac{\varpi}{\alpha}
-\frac{7a^{2}+1}{a\left(1-a^2\right)}\right\}(z-a)^{2}
\\
+\frac{\varpi}{25}\left\{\frac{11\varpi^{2}}
{14\alpha^{2}}
-\frac{\left(7a^{2}+1\right)\varpi}
{2a\left(1-a^{2}\right)\alpha}
+\frac{2\left(51a^4 + 36a^2 - 1\right)}{7a^{2}
\left(1-a^{2}\right)^{2}}\right\}(z-a)^{3}
\\
-\frac{\varpi}{1750}\left\{\frac{823\varpi^{3}}
{36\alpha^{3}}
-\frac{33\left(7a^{2}+1\right)\varpi^{2}}
{2a\left(1-a^{2}\right)\alpha^{2}}
+\frac{\left(1159a^4 + 674a^2 - 9\right)\varpi}
{4a^{2}\left(1-a^{2}\right)^{2}\alpha}
\right.
\\
\left.
-\frac{8141a^6 + 13639a^4 + 327a^2 + 37}
{9a^{3}\left(1-a^{2}\right)^{3}}\right\}(z-a)^{4}
+ \ldots,
\end{multline}
where
\begin{equation} 
\label{eq196}
\varpi=\varpi(a)=\left\{\frac{a}
{\left(1-a^{2}\right)^{2}\alpha}\right\}^{1/3}.
\end{equation}
Both \cref{eq192,eq195} hold in the limit as $a \to 0$ ($\alpha \to 0$), and for both one can show in this case, on referring to \cref{eq160,eq196}, that $\zeta=z+\tfrac14 z^{3}+\tfrac{13}{96}z^{5}+\cdots$.

The construction of the corresponding Taylor series at $z=a$ for $\mathrm{A}_{2s}(a,z)$ ($s=1,2,3,\ldots$) and $\mathrm{B}_{2s}(a,z)$ ($s=0,1,2,\ldots$) becomes extremely unwieldy, for even a small number of terms, if $a$ is kept as a general parameter. Instead, for each prescribed value of $a$, it is more practical to numerically compute the series and then discard singular terms (fractional and negative powers of $(z-a$)), since we know that they vanish, even though numerically they are not identically zero due to rounding errors.

For example, for $a=0.5$ we find numerically in Maple (with Digits set to 40) that the Taylor series for $\mathrm{A}_{2}(0.5,z)$ is given by
\begin{multline} 
\label{eq197}
\frac{8.8\times10^{-41}}{(z-0.5)^3}
+\frac{1.4\times10^{-40}}{(z-0.5)^{5/2}}
-\frac{1.2\times10^{-39}}{(z-0.5)^{2}}
+\frac{4.8\times10^{-40}}{(z-0.5)^{3/2}}
\\
-\frac{2.2\times10^{-39}}{z-0.5}
-\frac{2.3\times10^{-39}}{(z-0.5)^{1/2}}
-0.0091223906
+1.1\times10^{-38}(z-0.5)^{1/2}
\\
-0.00034392390(z-0.5)
+2.7\times10^{-38}(z-0.5)^{3/2}
+0.0010293232(z-0.5)^{2}+\ldots,
\end{multline}
where here we have recorded the singular terms to two significant figures, and the nonsingular ones to 8 digits. Throwing out the superfluous terms yields the approximation that is appropriate for $z$ close to the turning point
\begin{multline}
\label{eq198}
\mathrm{A}_{2}(0.5,z) \approx -0.0091223906
-0.00034392390(z-0.5)
\\
+0.0010293232(z-0.5)^{2}+\ldots.
\end{multline}
Subsequently, we actually shall employ more digits for these coefficients.

We summarize the main results of this section. The Legendre functions have representations \cref{eq164,eq165,eq166}, where $\zeta=\zeta(z)$ is defined by (\ref{eq08}), $\xi=\xi(z)$ is given by \cref{eq131,eq132}, and $\alpha$ by \cref{eq161}. Here, the Legendre functions $\mathsf{Q}^{\mu}_{\nu,\pm 1}(z)$ are defined by \cref{eq121,eq122}, and $\mathsf{P}^{-\mu}_{\nu}(z)$ and $\mathsf{Q}^{-\mu}_{\nu}(z)$ are the analytic continuations into the complex plane of the standard Ferrers functions, and are expressible in the forms \cref{eq123,eq124}.

For $z$ lying in the principal complex plane having cuts along $(-\infty,-1]$ and $[1,\infty)$, with points on and close to the interval $[-a,a]$ removed, the coefficient functions $\mathcal{A}(u,a,z)$ and $\mathcal{B}(u,a,z)$ possess the expansions \cref{eq187,eq188,eq189,eq190} as $u=\nu+\frac12 \to \infty$, uniformly for $\delta u \leq \mu \leq u$. In these, the coefficients $\tilde{\mathcal{E}}_{s}(a,z)$ and $\mathcal{E}_{s}(a,z)$ are polynomials in $\beta$ and $\hat{\beta}$, as given by \cref{eq52,eq53,eq56,eq57,eq58,eq73,eq74,eq141,eq142,eq143,eq144,eq145,eq146,eq175,eq176}, in which $\hat{\alpha}=\alpha$. The constants $d_{2s+1}(\alpha)$ can be determined via \cref{eq173,eq174,eq182} and the asymptotic expansion of the gamma function \cite[Eq. 5.11.1]{NIST:DLMF}.

For points near or in the interval $[-a,a]$ the expansions \cref{eq187,eq189,eq191} are valid, where $\mathrm{A}_{2s}(a,z)$ and $\mathrm{B}_{2s}(a,z)$ are analytic at $z=\pm a$, and can be evaluated by expanding \cref{eq188,eq190} in inverse powers of $u$.

\section{Numerical results}
\label{sec:numerics}

If $\mathsf{P}^{-\mu}_{\nu}(x)$ has one or more zeros in $[0,1)$ we define an envelope function for it by
\begin{equation}
\label{eq199}
M(\nu,\mu,x)=
    \begin{cases}
        \left[\left\{\mathsf{P}^{-\mu}_{\nu}(x)\right\}^{2}
        +\left\{2\mathsf{Q}^{-\mu}_{\nu}(x)/\pi
        \right\}^{2}\right]^{1/2} 
        & (0<x \leq q^{-\mu}_{\nu,1})\\
        \mathsf{P}^{-\mu}_{\nu}(x) & (q^{-\mu}_{\nu,1} < x < 1)
    \end{cases},
\end{equation}
where $q^{-\mu}_{\nu,1}$ is the largest positive zero of $\mathsf{Q}^{-\mu}_{\nu}(x)$. In the oscillatory interval $(0,q^{-\mu}_{\nu,1})$ this positive function is close to the amplitude of $\mathsf{P}^{-\mu}_{\nu}(x)$, while coinciding with the function in the nonoscillatory interval. For example, in \cref{fig:EnvP} the function $\mathsf{P}^{-\mu}_{\nu}(x)$ (solid curve) and its envelope $M(\nu,\mu,x)$ (dotted curve), together with the reflection $-M(\nu,\mu,x)$ (dash-dotted curve) and $\mathsf{Q}^{-\mu}_{\nu}(x)$ (dashed curve), are shown for $\nu=50$ ($u=101/2$) and $\mu=(1-a^2)^{1/2}u$ ($u=\nu+\frac12$) with $a=0.5$ and $x \in [0,1)$. In the use of (\ref{eq199}) we find numerically for these parameter values that $q_{\nu,1}^{-\mu}=0.42542\cdots$.

\begin{figure}
 \centering
 \includegraphics[
 width=0.7\textwidth,keepaspectratio]{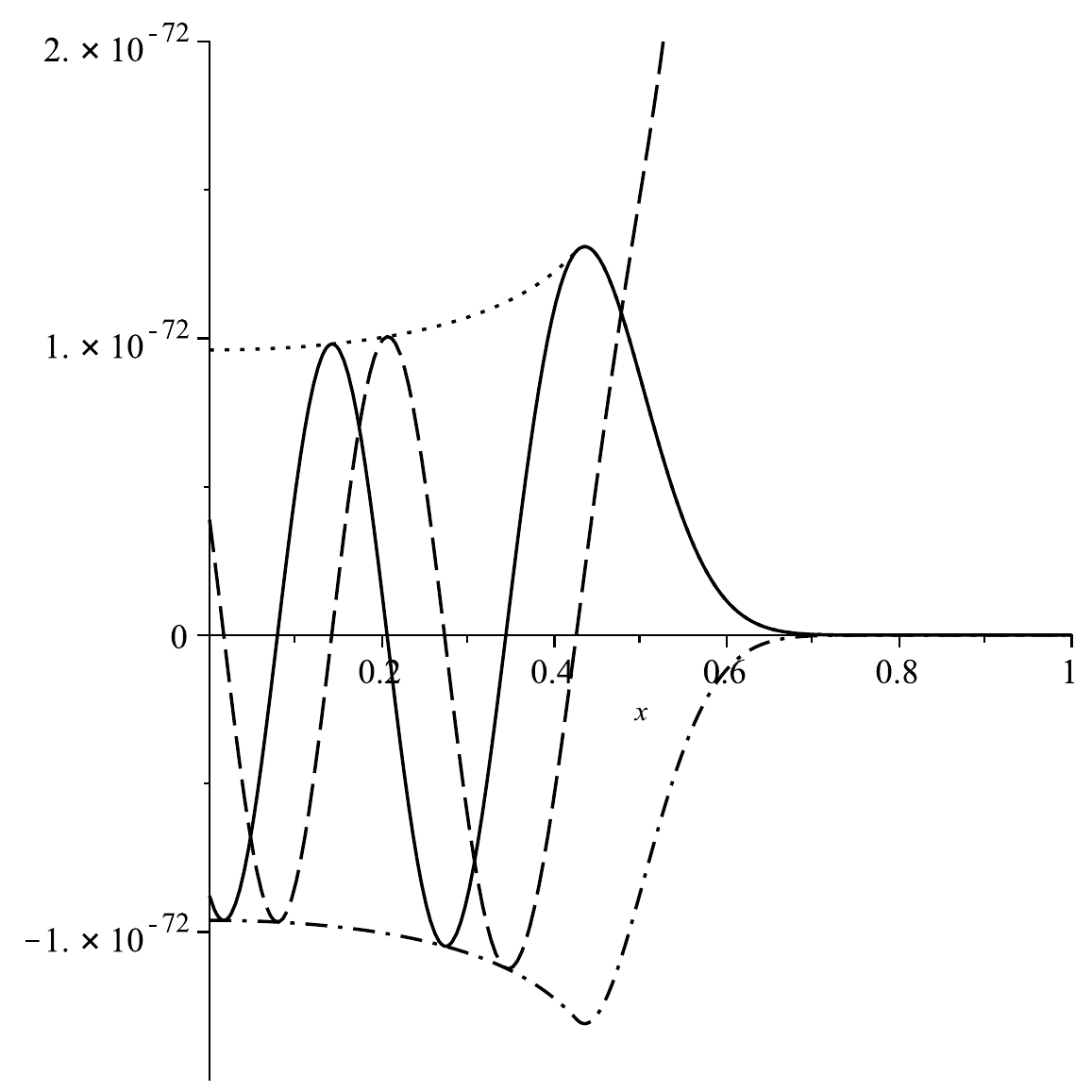}
 \caption{Graphs of $\mathsf{P}^{-\mu}_{\nu}(x)$ (solid curve), $2\mathsf{Q}^{-\mu}_{\nu}(x)/\pi$ (dashed curve), $M(\nu,\mu,x)$ (dotted curve) and $-M(\nu,\mu,x)$ (dash-dotted curve) for $\nu=50$ and $\mu=(1-a^2)^{1/2}u$ ($u=\nu+\frac12$) with $a=0.5$.}
 \label{fig:EnvP}
\end{figure}

\begin{figure}
 \centering
 \includegraphics[
 width=0.7\textwidth,keepaspectratio]{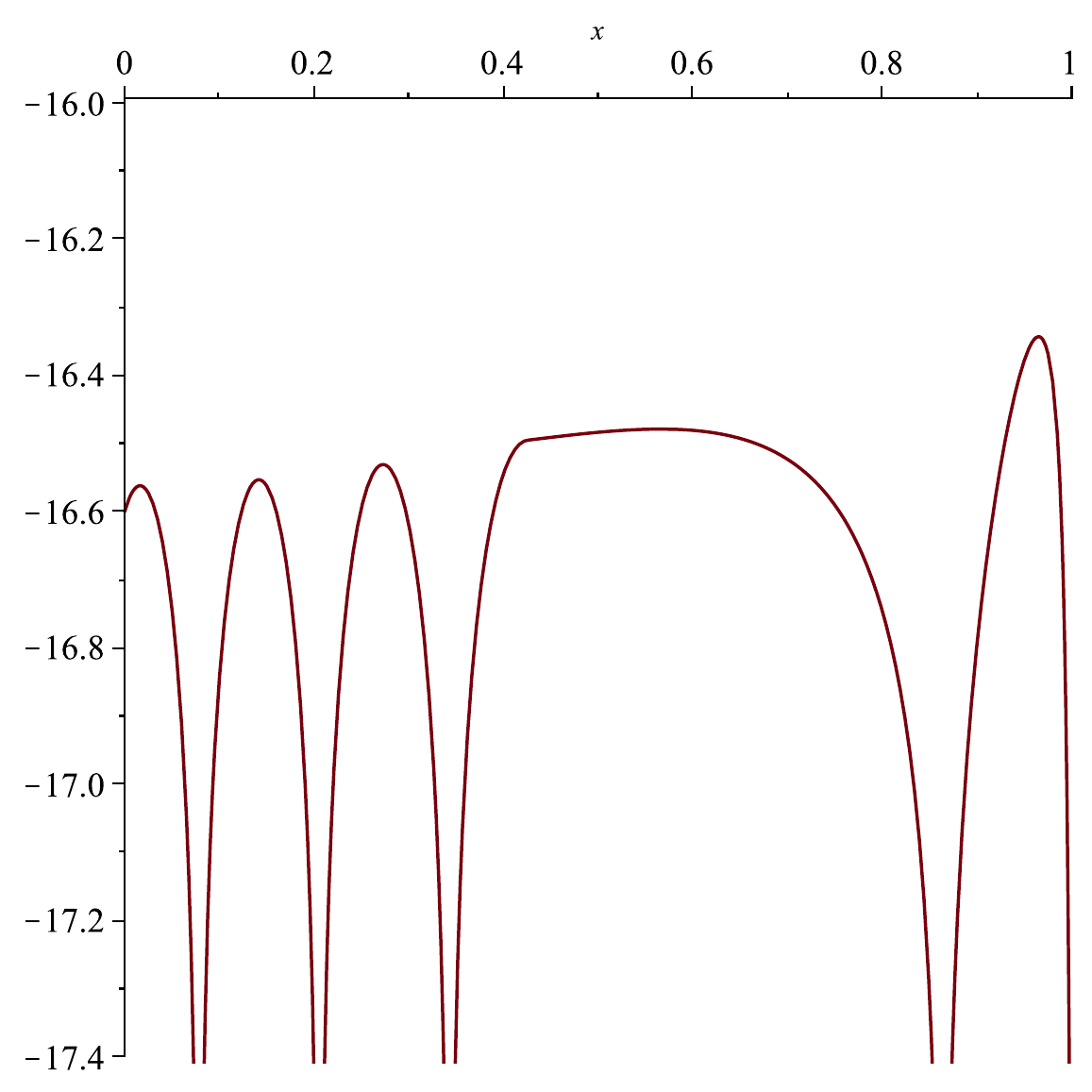}
 \caption{Graph of $\Omega_{4}(\nu,\mu,x)$ for $\nu=50$ and $\mu=(1-a^2)^{1/2}u$ ($u=\nu+\frac12$), with $a=0.5$.}
 \label{fig:Err1}
\end{figure}

\begin{figure}
 \centering
 \includegraphics[
 width=0.7\textwidth,keepaspectratio]{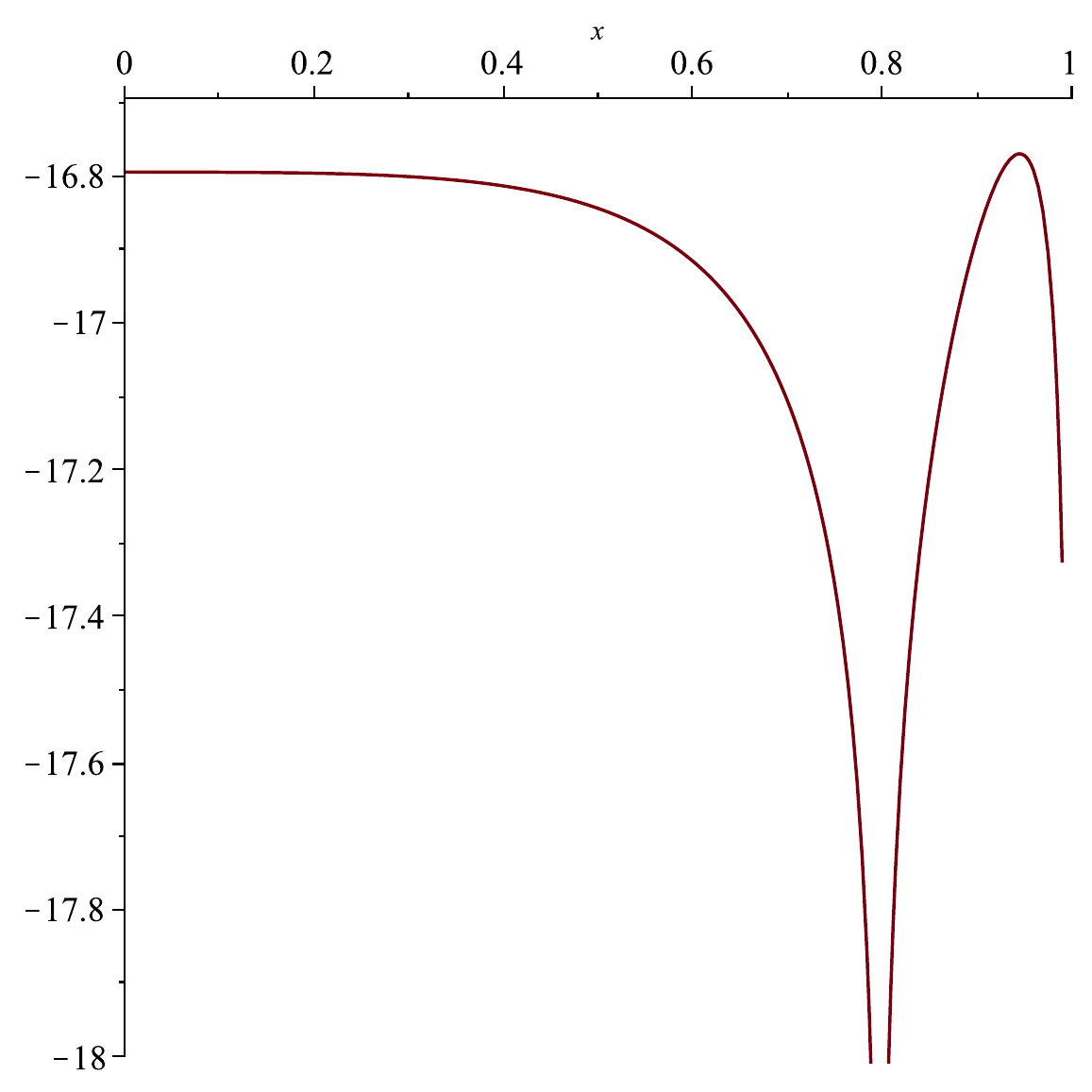}
 \caption{Graph of $\Omega_{4}(\nu,\mu,x)$ for $\nu=50$ and $\mu=(1-a^2)^{1/2}u$ ($u=\nu+\frac12$), with $a=0.1$.}
 \label{fig:Err2}
\end{figure}

Next, for $z=x \in [0,1)$, on taking the first four terms in \cref{eq187,eq188,eq189,eq190} define the approximations
\begin{equation} 
\label{eq200}
\mathcal{A}_{4}(u,a,x) =
\frac{\pi^{1/4}}{u^{1/4}\sqrt{2 \Gamma(u+\mu+\tfrac12)}}
\left(\frac{\zeta^{2}-\alpha^{2}}{x^{2}-a^{2}}\right)^{1/4}
\left\{1 + \sum_{s=1}^{3}
\frac{\mathrm{A}_{2s}(a,x)}{u^{2s}}\right\},
\end{equation}
and
\begin{equation} 
\label{eq201}
\mathcal{B}_{4}(u,a,x) =
\frac{\pi^{1/4}}{u^{9/4}\sqrt{2 \Gamma(u+\mu+\tfrac12)}}
\left(\frac{\zeta^{2}-\alpha^{2}}{x^{2}-a^{2}}\right)^{1/4}
\sum_{s=0}^{3}
\frac{\mathrm{B}_{2s}(a,x)}{u^{2s}},
\end{equation}
where $\zeta = \zeta(x) \in [0,\infty)$ is given by \cref{eq162,eq163}. Then from \cref{eq165,eq200,eq201} consider the following error for $z=x \in [0,1)$ in the approximation
\begin{multline}
\label{eq202}
\Delta_{4}(\nu,\mu,x)
=\mathsf{P}^{-\mu}_{\nu}(x)-
\sqrt{\frac{2}{\pi}}
\Biggl\{U\left(\mu-\nu-\tfrac{1}{2}, \sqrt{2u}\,\zeta\right)\mathcal{A}_{4}(u,a,x) \Biggr.
\\ \left.
+\frac{\partial U\left(\mu-\nu-\tfrac{1}{2},\sqrt{2u}\,
\zeta\right)}{\partial\zeta}\mathcal{B}_{4}(u,a,x)\right\}.
\end{multline}
To assess the relative accuracy of this, we computed the function
\begin{equation} 
\label{eq203}
\Omega_{4}(\nu,\mu,x)
=\log_{10}\left\{\frac{\left|\Delta_{4}(\nu,\mu,x)\right|}{M(\nu,\mu,x)}\right\}
\quad (0 \leq x <1).
\end{equation}

A plot of $\Omega_{4}(\nu,\mu,x)$ with $\nu=50$ ($u=101/2$) and $\mu=(1-a^2)^{1/2}u$ is given in \cref{fig:Err1} for $a=0.5$, and in \cref{fig:Err2} for $a=0.1$. In the latter case $\mathsf{P}^{-\mu}_{\nu}(z)$ does not have zeros in $[0,1)$ and so we took $M(\nu,\mu,x)=\mathsf{P}^{-\mu}_{\nu}(z)$ in (\ref{eq203}).

In both cases, we used the Taylor series about $x=a$ for $|x-a|<0.08$ as described above (see \cref{eq198}), and computed the coefficients in \cref{eq200,eq201} directly for all other values of $x$. The plots demonstrate about 16 digits or better of accuracy for both values of $a$, uniformly throughout the interval.

\section*{Acknowledgments}
I thank the anonymous referees for many helpful comments.

Financial support from Ministerio de Ciencia e Innovación pro\-ject PID2021-127252NB-I00 (MCIN/AEI/10.13039/ 501100011033/FEDER, UE) is acknowledged.

\section*{Conflict of interest}
The author declares no conflicts of interest.

\makeatletter
\interlinepenalty=10000

\bibliographystyle{siamplain}
\bibliography{biblio}
\end{document}